\documentclass[reqno, 12pt]{amsart}

\numberwithin{equation}{section}

\usepackage{amssymb}
\usepackage{mathrsfs}
\usepackage{dsfont}
\usepackage{enumerate, xspace}

\usepackage{changebar}
\usepackage{hyperref}
\usepackage{pdfsync}
\usepackage{color}
\usepackage{bm}
\usepackage{amsmath}
\usepackage[a4paper, margin=10mm]{geometry}

\hfuzz=15pt

 \textwidth 160mm
 \textheight 218mm
 \topmargin 10mm
 \oddsidemargin 0pt
 \evensidemargin 0pt

\newtheorem{thmm}{Theorem}[section]

\newtheorem{thm}{Theorem}[section]

\newtheorem{prop}[thm]{Proposition}


\newcommand\cH{{\mathcal H}}

\newcommand\cL{{\mathcal L}}


\newcommand\bE{{\mathbb E}}

\newcommand\bP{{\mathbb P}}
\newcommand\bR{{\mathbb R}}




\newcommand\ve{\varepsilon}

\def\La{{\Lambda}}
\def\ga{{\gamma}}



\newcommand\nn{\nonumber}

\newcommand{\h}{\mathsf{h}}				

 

\setlength{\unitlength}{1mm}

\begin{document}

\title[Quasi-Static Large Deviations]{Quasi-static Large Deviations}

\author{Anna de Masi}
\address{Anna de Masi\\Universit\`a dell'Aquila,\\
 67100 L'Aquila, Italy}
\email{{\tt demasi@univaq.it}}

\author{Stefano Olla}
\address{Stefano Olla\\
CEREMADE, UMR CNRS\\
Universit\'e Paris-Dauphine, PSL Research University\\
75016 Paris, France}
\email{{\tt olla@ceremade.dauphine.fr}}

\date{\today. {\bf File: {\jobname}.tex.}}
\begin{abstract}
We consider the symmetric simple exclusion with open boundaries that are in contact with 
particle reservoirs at different densities. The reservoir densities changes at a slower time scale 
with respect to the natural time scale the system reaches the stationary state.
This gives rise to the
quasi static hydrodynamic  limit proven in \cite{dmo1}. We study here the large deviations with respect to this limit
for the particle density field and the total current. We identify explicitely the large deviation functional 
and prove that it satisfies a fluctuation relation. 
 \end{abstract}
\thanks{We thank Davide Gabrielli and Errico  Presutti for stimulating discussions and remarks.
  We thank the kind hospitality of GSSI. We also thank the referees for the careful review and
  the stimulating remarks.\\
SO's research is supported by ANR-15-CE40-0020-01 grant LSD} 
\keywords{Quasi-Static thermodynamic, hydrodynamic limits, Large Deviations}
\subjclass[2000]{60F10, 82C22, 82C70}
\maketitle

\section{Introduction}
\label{sec:introduction}

In usual hydrodynamic limits it is studied the macroscopic evolution of the conserved quantities of a 
\emph{large} microscopic system, under a space-time scaling such that the time scale is the 
typical one where the system {reaches equilibrium}.
When the system is open and connected 
to thermal or particle reservoirs, the parameters of these reservoirs give the boundary conditions for the 
partial differential equations   { which describe } the macroscopic evolution of the conserved quantities.

{ In this paper we consider the case in which the parameters
of the boundary reservoirs (for example the particle density or the temperature) are not constant but change in time on a slower scale than that of relaxation at equilibrium. Rescaling the time at this slower scale
give a \emph{quasi static} macroscopic evolution for the conserved quantities: at each \emph{macroscopic} time 
the profile of the conserved quantity is equal to the stationary one corresponding to the given boundary condition at that time.}
When these stationary profiles are of equilibrium, this limit models the thermodynamic quasi static transformations, 
where the Clausius equality holds, i.e. the \emph{work}
{\color{red} is} done by the boundaries. 

In \cite {bgjll13}, \cite{olla2014} and \cite{LO2015}, quasi static transformations are obtained from a time rescaling of the
 macroscopic diffusive equation.

In \cite{dmo1} we studied,  for various  stochastic particle systems 
whose macroscopic evolution is described by a diffusive equation,
 the direct hydrodynamic quasi static limit by rescaling properly space and time in the microscopic dynamics,

 We study here the large deviation for one of these models, the symmetric simple exclusion process. 


 {
The system we consider is composed by $2N+1$ sites (denoted by $-N, \dots, N$) where particles move like symmetric 
random walks with exclusion. We add birth and death processes at the left and right  boundaries that describe the interaction with reservoirs.
These reservoirs have densities $\rho_-(t)$ on the left and $\rho_+(t)$ on the right, that are time dependent.  The time scale at which 
these density $\rho_\pm$ change defines the \emph{macroscopic time scale}.
If the particles jump and are created and destroyed at rate $N^2$, the macroscopic density profile evolves following a linear diffusive equation 
with boundary conditions $\rho_\pm(t)$. This is the usual hydrodynamic limit and for $\rho_\pm$ 
constant this result is known since the  '80's, \cite{GKMP}.}

{ In the quasi static limit particles jump (or are created and destroyed at the boundaries) 
with rate on order $N^{2+\alpha}$ with $\alpha >0$.
As proved in  \cite{dmo1}, at each time $t$,  the empirical density converges, 
as $N\to \infty$, to the solution of  the stationary heat equation with boundary conditions $\rho_\pm(t)$: }
\begin{equation}
  \label{eq:75}
  \bar \rho(t,y)=\frac 12[\rho_+(t)-\rho_-(t)]y +\frac 12\left[\rho_+(t) + \rho_-(t)\right], \qquad y\in[-1,1],
\end{equation}
and the total current through any bond will converge to 
\begin{equation}
  \label{eq:76}
  \bar J(t) = - \int_0^t \frac 12 [\rho_+(s)-\rho_-(s)] \; ds.
\end{equation}
Thus the quasi-static limit gives the evolution of macroscopic profile, completely driven by the boundary conditions.

 In \cite{dmo1}  it is also proven that 
  the distribution of the process is close to a product Bernoulli measure with parameter 
$  \bar \rho(t,y)$. 
Stronger results are proved by controlling the correlation functions with the same techniques used in \cite{DPTV2}.
First order  corrections have been studied in \cite{ch-landim}.

{ Observe that if $\rho_+(t)=\rho_-(t)$ then the above result says that the stochastic process at any macroscopic time 
 is close to equilibrium but the order parameter changes in time thus  performing a quasi static transformation.}

We study in this article  {\it {the joint }} large deviations {\it{of the density and the current} }with 
respect to this quasi-static limit.
We prove that the probability to observe a density profile  $\rho(t,y)$ that satisfies the boundary conditions 
$\rho_\pm(t)$ and a total current $J(t)$ is
asymptotically {  $\bP\left( \rho, J\right) \ \sim \ e^{-N^{1+\alpha}I(J,\rho)}$}
where 
\begin{equation}
  \label{eq:78}
   I(J,\rho) = \frac 14 \int_0^T \int_{-1}^{1} 
   \frac{\left( J'(t) +  \partial_y \rho(t,y)\right)^2}{\rho(t,y)[1-\rho(t,y)]}dy \; dt .
\end{equation}
See section \ref{sec:large-devi-theor} for a precise statement. Notice that $J(t)$ is space constant: in the quasi static limit 
the total current must be homogeneous in space. 
{Notice also that $I$ does not depend on the initial configuration}.

{ We also prove that $I$ statisfies the fluctuation relation }
\begin{equation}
 I(J,\rho) -  I(-J,\rho) =  \int_0^T J'(t) (z_+(t) - z_-(t)) dt,
\label{eq:79}
\end{equation}
where $z_\pm (t) = \log  \frac{\rho_\pm(t)}{1 - \rho_\pm(t)} $. 
{There are two interesting consequences of  \eqref{eq:79}: the first one is  that the difference $ I(J,\rho) -  I(-J,\rho) $
does not depend on $\rho$ (as long as $\rho$ satisfy the boundary conditions).
The other  is  that, since $I(\bar J,\bar \rho)=0$,

\begin{equation*}
  \begin{split}
    I(-\bar J,\bar \rho)= \int_0^T \frac 12[\rho_+(t)-\rho_-(t)] (z_+(t) - z_-(t))\; dt \\
    = \frac 12 \int_0^T \left[\mathcal H(\rho_+(t),\rho_-(t)) +  \mathcal H(\rho_-(t) ,\rho_+(t)) \right]\; dt,
  \end{split}
\end{equation*}
where $\mathcal H(\rho,\nu) = \rho\log \frac{\rho}{\nu} + (1-\rho) \log \frac{1-\rho}{1-\nu}$.}
Thus  the cost to {invert} the Fick's law is explicetely computable in terms of the  boundary conditions.



The formula \eqref{eq:78} looks similar to the \emph{fundamental formula} of the Macroscopic Fluctuation Theory
\cite{bertiniprl} \cite{bertinirmp}, but differs in some important points.  
In the usual hydrodynamic scaling, i.e. for $\alpha = 0$, the probability to observe a density profile
$\rho(t,y)$ (satisfying the boundary conditions $\rho(t,\pm 1) = \rho_\pm(t)$) and a current field $J(t,y)$, 
which must be related by the conservation law $\partial_t \rho(t,y) = -\partial_y \partial_t J(t,y)$,
 behaves like $e^{-N I_0(J,\rho) }$ with
\begin{equation}
  \label{eq:mft}
  I_{0}(J,\rho) =  \frac 14 \int_0^T \int_{-1}^{1} 
  \frac{\left(\partial_t J(t,y)  +  \partial_y \rho(t,y)\right)^2}{\rho(t,y)[1-\rho(t,y)]} dy \; dt
  + \int_{-1}^{1} \mathcal H(\rho(0,y),\nu_0(y)) dy.
\end{equation}

The second term on the right hand side of \eqref{eq:mft}
is due to the large deviations of the initial profile, if the initial distribution of the
process {is an inhomodeneous Bernoulli distribution with $\nu_0(y)$
as macroscopic profile of density}.

Formula \eqref{eq:mft} appears first in \cite{bertiniprl},
 while a time homogeneous version was previously introduced in \cite{bd04}. More precisely,
in the case $\rho_\pm$ constant in time and $\alpha =0$, the probability to observe a \emph{total} and \emph{time averaged}
current $\frac {Q(T)}{T} = \frac 1T \int_{-1}^1 J(T,y) dy  = 2q$ behaves like $e^{-N T I_{BD}(q) }$ with
\begin{equation}
  \label{eq:82}
  I_{BD}(q, \rho_+, \rho_-) =
  \inf_{\rho(\cdot): \rho(\pm 1) = \rho_\pm} \frac 14 \int_{-1}^{1}
  \frac{\left(q  + \rho'(y)\right)^2}{\rho(y)[1-\rho(y)]} dy
\end{equation}
In \cite{bd04} such large deviations behaviour is obtained under the assumption that the optimal density profile to obtain 
the \emph{total} current $q$ is independent of time. This is true in the symmetric simple exclusion case but not in all dynamics,
and the existence of time dependent optimal profile is related to the existence of dynamical phase transitions (cf. \cite{bertinijsp2016} 
\cite{bd06}). Notice that if we look at the large deviation such that $\partial_t J(t,y) = 2q$ (for all $t$ \emph{and} $y$), then 
the minimizing density must satisfy the continuity equation and consequently it is constant in time.
 For these \emph{space-homogeneous} deviations of the current field, $I_{BD}$ gives the right rate function.

In the quasistatic case, $\alpha>0$, the current field $J(t,y)$ has to be constant in $y$: 
 we cannot have large deviations where the intensity of the current is not constant in space, {see \eqref{2.10}.
Consequently there is no continuity equation to be satisfied between $J(t)$ and $\rho(t,y)$. 

This gives a direct connection with the Bodineau-Derrida functional $I_{BD}$:
 minimising the rate function $I(J,\rho)$ defined in \eqref{eq:78} over $\rho(t,y)$ in order to obtain  the large deviation in the quasistatic limit of the current $J$ we have (cf. \eqref{eq:73} at the end of section \ref{sec:rate-function})
\begin{equation}
  \label{eq:83}
  I(J) = \int_0^T  I_{BD}\left(J'(t), \rho_+(t), \rho_-(t)\right)\; dt.
\end{equation}



For simplicity we have restricted our attention to the symmetric simple exclusion, but in principle 
the result can be extended to other dynamics (like weakly asymmetric exclusion, zero range, KMP models etc.).
  Since in the symmetric simple exclusion $I_{BD}(J)$ is convex,
  we do not expect in the quasi static case the existence of \emph{dynamical phase transitions} 
  (cf. \cite{bertiniprl}, \cite{bertinijsp2016}).
In the other dynamics this remain an interesting question to be investigated.  

The main scheme of the proof goes along the lines of \cite{KOV1989} and \cite{BLM},  
but with the further feature of controlling also the deviations  of the current. 
Using a variational characterization of $I$, for deviations such that $I(J,\rho)<+\infty$, it is possible 
to find suitable regular approximations $J_\ve$ and $\rho_\ve$ such that $I(J_\ve,\rho_\ve) \to I(J,\rho)$. 
For regular $J,\rho$, it is possible to construct the weakly asymmetric exclusion dynamics {as described above} 
such that the corresponding 
quasi-static limit is given by $J$ and $\rho$ and whose relative entropy with respect to the original process converges 
to $I(J,\rho)$. This takes care of the lower bound. For the upper bound we use 
suitable exponential martingales that control also the current, and a superexponential estimate adapted from 
the original idea in \cite{KOV1989}.

\section{Simple exclusion with boundaries}
\label{sec:simple-excl-with}

We consider the exclusion process in $\{0,1\}^{\La_N}$,
$\La_N:=\{-N,..,N\}$ with reservoirs at the boundaries with density  
$\rho_\pm(t)\in [a,1-a]$ for some $a>0$. {
We assume that $\rho_\pm(t)\in C^1$}.

Denoting by $\eta(x)\in  \{0,1\}$ the occupation number at $x\in
\La_N$ we define the dynamics via the generator 
		\begin{equation}
		\label{0.1}
 L_{N,t}=N^{2+\alpha}[L_{\text{exc}}+L_{b,t}],\qquad t\ge 0,\quad \alpha>0
			\end{equation}
where for a given $\ga>0$, {
 \begin{equation}
   \label{0.2}
L_{\text{exc}}f(\eta) 
= \gamma \sum_{x=-N}^{N-1} \Big(f(\eta^{(x,x+1)})-f(\eta)\Big)=:
  \sum_{x=-N}^{N-1}\nabla_{x,x+1}f(\eta)
   \end{equation}
$\eta^{(x,y)}$ is the configuration obtained from $\eta$ by exchanging the occupation numbers at $x$ and $y$}, and  
\begin{equation}
  \begin{split}
    L_{b,t}f(\eta) = \sum_{\j = \pm}\rho_{\j}(t)^{1-\eta(\j N)} (1-\rho_{\j}(t))^{\eta(\j N)}
    [f(\eta^{\j N})-f(\eta)] 
  \end{split}
 \label{eq:1}
\end{equation}
where $\eta^x (x) = 1-\eta(x)$, and $\eta^x (y) = \eta(y)$ for $x\neq y$.

We recall the quasi-static hydrodynamic limit proven in \cite{dmo1}:
	\begin{thm}
	\label{thm1}
For any $\alpha>0$ and  for  any macroscopic time $t> 0$ the following holds. For any initial configuration $\eta_0$, for any $y\in[-1,1]$ and for any local function $\varphi$
	\begin{equation}
	\label{0.5}
\lim_{N\to\infty}\mathbb E_{\eta_0}\Big(
\theta_{[Ny]} \varphi (\eta_{t})\Big)=
 <\varphi(\eta)>_{\bar \rho(y,t)} =: \hat\varphi(\bar \rho(y,t)) 
	\end{equation}
where $[\cdot]$ denotes integer part, $\theta$ is the shift operator, $<\cdot>_{\rho}$
 is the expectation with respect to the product Bernoulli measure of density $\rho$, 
and  
\begin{equation}
 \label{eq:2} 
 \bar \rho(y,t)=\frac 12[\rho_+(t)-\rho_-(t)] y
 +\frac 12\left[\rho_+(t) + \rho_-(t)\right], \qquad y\in[-1,1]
 \end{equation}
is the quasi-static profile of density at time $t$.
{In particular}  for any $t>0$ and any $y\in[-1,1]$
\begin{equation}
	\label{1.11}
\lim_{N\to\infty}\,\,
\mathbb E_{\eta_0}\big[\eta_{t}([Ny])\big] \ =\ \bar\rho(y,t)
	\end{equation}
	\end{thm}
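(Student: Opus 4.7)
The key observation is that in the $N^{2+\alpha}$ time scale with $\alpha>0$, the bulk diffusion is accelerated by a factor $N^\alpha$ relative to the usual hydrodynamic scale, while the boundary data $\rho_\pm(t)$ vary at rate $O(1)$; hence at each macroscopic time $t$ the system should have essentially relaxed to the stationary profile corresponding to the instantaneous boundary values $\rho_\pm(t)$. The argument naturally splits in two parts: (i) convergence of the one-point expectation to $\bar\rho(r,t)$, and (ii) asymptotic factorization of the finite-dimensional marginals around each site $[Nr]$ (local equilibrium).

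For part (i), set $\rho_N(x,t):=\bE_{\eta_0}[\eta_t(x)]$. From \eqref{0.1}--\eqref{eq:1} one derives the closed equation
\[
\partial_t \rho_N(x,t) = N^{2+\alpha}\gamma\,\Delta_N \rho_N(x,t),\qquad -N<x<N,
\]
together with boundary relations at $x=\pm N$ that relax $\rho_N(\pm N,t)$ to $\rho_\pm(t)$ at rate $N^{2+\alpha}$. Let $\bar\rho_N(x,t):=\tfrac12[\rho_+(t)-\rho_-(t)](x/N)+\tfrac12[\rho_+(t)+\rho_-(t)]$ be the discrete harmonic interpolant with the correct boundary values. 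The error $u_N:=\rho_N-\bar\rho_N$ then satisfies
\[
\partial_t u_N(x,t) = N^{2+\alpha}\gamma\,\Delta_N u_N(x,t) - \partial_t \bar\rho_N(x,t),
\]
with boundary data that are rapidly damped to $0$. Since $\rho_\pm\in C^1$, the source $\partial_t \bar\rho_N$ is bounded uniformly in $N$ and $x$, while the discrete heat semigroup on $\La_N$ with these boundary conditions has spectral gap of order $N^{2+\alpha}/N^2 = N^\alpha$. A Duhamel estimate (or a straightforward maximum-principle argument) then yields $\sup_x |u_N(x,t)| = O(N^{-\alpha})$ uniformly on compact time intervals, which is \eqref{1.11}; note that the initial data $u_N(\cdot,0)$ is bounded by $1$, so the rapid contraction washes out the dependence on $\eta_0$ for every $t>0$.

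For part (ii), I would follow the correlation-function approach of \cite{DPTV2}. Introduce the truncated correlation functions
\[
v_N^{(k)}(x_1,\dots,x_k,t):=\bE_{\eta_0}\!\left[\prod_{i=1}^k\bigl(\eta_t(x_i)-\bar\rho_N(x_i,t)\bigr)\right],
\]
and derive the closed hierarchy they satisfy under $L_{N,t}$. The crucial feature of the exclusion rule is that the coupling from $v_N^{(k)}$ to $v_N^{(k-1)}$ occurs only at coinciding or neighbouring sites and carries an extra macroscopic factor $1/N$. Combined with the accelerated diffusive damping $N^{2+\alpha}\Delta_N$, an inductive bootstrap then gives $\|v_N^{(k)}\|_\infty = O(N^{-1})$ uniformly on compact time intervals. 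From the resulting asymptotic product-Bernoulli structure of the finite-dimensional marginals one deduces, for any local $\varphi$ and any $r\in(-1,1)$, the convergence of $\bE_{\eta_0}[\theta_{[Nr]}\varphi(\eta_t)]$ to $\hat\varphi(\bar\rho(r,t))$.

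The main obstacle in both parts is the time-dependent source terms, generated by $\partial_t\bar\rho_N$ in the $u_N$-equation and by $\partial_t\rho_\pm$ in the hierarchy for the $v_N^{(k)}$: in the static-reservoir setting these sources are absent, whereas here they are of order $1$, and one must carefully exploit the factor $N^\alpha$ gained from the accelerated time scale to absorb them. A secondary technical point is the boundary layer at $x=\pm N$, where the reservoir dynamics do not produce exact Dirichlet conditions on $u_N$; the $O(1)$ defect there must be absorbed in a boundary region of $O(1)$ many sites, which contributes negligibly to the macroscopic profile.
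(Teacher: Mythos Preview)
The paper does not actually prove Theorem~\ref{thm1}: it is stated as a result ``proven in \cite{dmo1}'' and simply recalled here as background for the large deviation analysis. So there is no proof in the paper to compare against, strictly speaking.

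That said, your sketch is consistent with the methods the paper attributes to \cite{dmo1} and \cite{DPTV2}. Part~(i) exploits the special feature of symmetric exclusion that the one-point function satisfies a \emph{closed} linear equation (no BBGKY truncation needed), and your Duhamel/spectral-gap argument against the discrete harmonic interpolant is exactly the right mechanism: the $N^\alpha$ gap absorbs the $O(1)$ source $\partial_t\bar\rho_N$ and the $O(1)$ initial error. Part~(ii) via the truncated correlation hierarchy is precisely what the paper alludes to when it writes that ``stronger results are proved by controlling the correlation functions with the same techniques used in \cite{DPTV2}.'' One minor quibble: the bound $\|v_N^{(k)}\|_\infty = O(N^{-1})$ is a bit casual --- the usual bootstrap gives a decay that improves with $k$ (schematically $O(N^{-(k-1)})$ in the stationary case), and you should state what you actually need and why the time-dependent forcing does not spoil it.

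It may interest you that the LaTeX source of the paper contains, \emph{after} the \verb|\end{document}|, a discarded section (marked ``Questa \`e la vecchia parte, da eliminare'') giving an alternative proof by the relative-entropy method. That approach avoids duality/correlation functions entirely but yields only a time-integrated statement, i.e.\ convergence of $\int_0^t(\cdots)\,ds$ rather than pointwise in $t$; your correlation-function route is sharper and closer to what \cite{dmo1} apparently does.
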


In the following we will use the notation
\begin{equation}
  \label{eq:41}
  \phi(\rho) = \rho(1-\rho).
\end{equation}

Define the following counting processes: for $x=-N-1, \dots, N$ 
\begin{equation}
  \label{eq:18}
  \begin{split}
{  \h_+(t,x)}
    &= \{\text{number of jumps $x \to x+1$ up to time $t$}\},\\
{   \h_-(t,x) } &= \{\text{number of jumps $x+1 \to x$ up to time $t$}\}, \\
    \h(t,x)&= \h_+(t,x) - \h_-(t,x) 
  \end{split}
\end{equation}

When $x = -N-1$ the corresponding $\h_+(t,-N-1)$ is the number of particles that enters on the left boundary, and 
$\h_+(t,N)$ is the number of particles that exit at the right boundary.

The conservation law is microscopically given by the relation
\begin{equation}\label{eq:claw}
  \eta_t(x) - \eta_0(x) = \h(t,x-1) - \h(t,x), \qquad x= -N , \dots, N
\end{equation}
  Furthermore we have that for $x=-N,\dots,N-1$:
\begin{equation*}
   \h(t,x) = \gamma N^{2+\alpha} \int_0^t \left(\eta_s(x) - \eta_s(x+1)\right) ds +M(t)
 \end{equation*}
 where $M(t)$ is a martingale.


For $y\in [-1,1]$, define
$$
\h_N(t,y) = \frac 1{N^{1+\alpha}} \h(t, [Ny]).
$$

Notice that, for any $x,x'\in \{-N,\dots,N\}$, by \eqref{eq:claw} we must have
 $|\h(t,x) - \h(t,x')| \le 2N$, that implies
\begin{equation}
\label{2.10}
  |\h_N(t,y) - \h_N(t,y')| \le \frac{2}{N^{\alpha}}, \quad \forall y, y' \in [-1,1].
\end{equation}

It follows that
\begin{equation*}
  \begin{split}
   \frac1{N^{1+\alpha}} \mathbb E_{\eta_0}\Big(\h(t,x)\Big) \ =\  \frac1{N^{1+\alpha}} \frac 1{2N}
   \mathbb E_{\eta_0}\Big(\sum_{x'=-N}^{N-1} \h(t,x')\Big)  + O(N^{-\alpha})
   \\
   = \frac1{N^{1+\alpha}} \frac 1{2N}
   \mathbb E_{\eta_0}\Big(\gamma N^{2+\alpha} \int_0^t \sum_{x'=-N}^{N-1} (\eta_s(x') - \eta_s(x'+1))  \Big)  + O(N^{-\alpha})
 \\ = \frac \gamma 2 \mathbb E_{\eta_0}\Big(\int_0^t \left(\eta_s(-N) - \eta_s(N)\right) ds\Big) 
    + O(N^{-\alpha})
  \end{split}
\end{equation*}
	Thus from \eqref{1.11}
\begin{equation}
  \label{eq:14}
  \begin{split}
    &\lim_{N\to\infty}\mathbb E_{\eta_0}\Big(\h_N(t,y)\Big) \ =\ \bar h(t) \ :=  \ - \frac \gamma 2\int_0^t [\rho_+(s)-\rho_-(s)] ds.
  \end{split}
\end{equation}
{Observe that the total current depends on $\ga$ 
while the quasi-static profile does not. In the sequel we will set $\ga=1$.}

\section{The rate function}
\label{sec:rate-function}



We denote by $\mathcal M =\left\{ \rho(t,y) \; \text{measurable}, t\in [0,T], y\in [-1,1], 0\le \rho(t,y) \le 1\right\}$. 
We endow $\mathcal M$ of the weak topology, i.e. for any continuous function $G \in \mathcal C\left([0,T]\times [-1,1]\right)$,
$\rho \mapsto \int_{0}^T dt \int_{-1}^1 dy \rho(t,y) G(t,y)$ is continuous on $\mathcal M$. 
Note that $\mathcal M$ is compact under this topology.

Let $\rho(t,y) \in \mathcal M$ and  $J(t) \in \mathcal D\left([0,T], \bR \right)$, 
with $J(0) = 0$.
The rate function is defined by:  {
\begin{equation}
  I(J,\rho)  =  \sup_{H\in \mathcal C^{1,2}([0,T]\times[-1,1]),\atop H(\cdot,-1) = 0} \Big\{ \cL(H;J,\rho)  - \int_0^T dt \int_{-1}^{1} dy \left(\partial_y H(t,y)\right)^2 \phi(\rho(t,y))\Big\}
\label{eq:24}
 \end{equation}
where
 \begin{eqnarray}
\nn
&& \hskip-.7cm\cL(H;J,\rho):=
H(1,T) J(T) + \int_0^T dt \Big(-\partial_t H(t,1) J(t)   
  - \int_{-1}^{1}\partial_{yy} H(t,y) \rho(t,y) dy
   \\&& \hskip1.6cm  + \left(\partial_yH(t,1) \rho_+(t) - \partial_yH(t,-1) \rho_-(t)\right) \Big) 
   \label{eq:24b}
\end{eqnarray}
}

Observe that if $\partial_y \rho$ and $J'$ exist and are regular enough and $\rho(t, \pm 1) = \rho_{\pm}(t)$, { then
$ I(J,\rho) $ is given by
\begin{equation}
  \label{eq:rate}
   I(J,\rho) = \frac 14 \int_0^T \int_{-1}^{1} 
   \frac{\left( J'(t) +  \partial_y \rho(t,y)\right)^2}{\phi(\rho(t,y))} dy \; dt ,
\end{equation}
and 
the maximum is reached on the function
\begin{equation}
\label{eq:barh}
  \bar H(t,y) = \frac 12 \int_{-1}^y \frac{J'(t) + \partial_y \rho(t,y')}{\phi(\rho(t,y'))} dy'.
\end{equation}} 


Define 
\begin{equation*}
  \begin{split}
    \cH_+ = \text{clos}\Big\{ &H(t,y)\in \mathcal C^{1,2}([0,T]\times[-1,1]), H(\cdot,-1) = 0 : \\
      &\|H\|_{\rho, +}^2 = \int_0^T dt \int_{-1}^{1} dy \left(\partial_y H(t,y)\right)^2
      \phi(\rho(t,y)) < +\infty\Big\}.
  \end{split}
\end{equation*}
and the dual space
\begin{equation*}
  \begin{split}
    \cH_{-} = \text{clos}\Big\{ &H(t,y)\in \mathcal C^{1,2}([0,T]\times[-1,1])
    : \\
      &\|H\|_{\rho,-}^2 = \int_0^T dt \int_{-1}^{1} dy \left(\partial_y H(t,y)\right)^2
      \phi(\rho(t,y))^{-1} < +\infty\Big\}.
  \end{split}
\end{equation*}


\begin{prop}\label{prop1}
  If $ I(J,\rho) <\infty$ then the weak derivatives $J'(t)$ and $\partial_y \rho(t,y)$ exists in $ \cH_{-}$ and $\rho(t, \pm 1) = \rho_{\pm}(t)$,
furthermore
  \begin{equation}
    \label{eq:50}
     I(J,\rho) = \frac 14 \int_0^T \int_{-1}^{1}  \frac{ \left( J'(t) + \partial_y \rho(t,y)\right)^2}{\phi(\rho(t,y))}  \; dy \; dt .
  \end{equation}
\end{prop}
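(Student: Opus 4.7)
The plan is to read the definition \eqref{eq:24} as a Fenchel–Legendre transform of a quadratic form, and deduce the conclusions from the Riesz representation theorem on the Hilbert space $\cH_+$. First, from finiteness of $I(J,\rho)$ together with the scaling $H \mapsto \lambda H$ and optimization over $\lambda \in \bR$, the inequality $\lambda \cL(H;J,\rho) - \lambda^2 \|H\|_{\rho,+}^2 \leq I(J,\rho)$ yields the Cauchy--Schwarz-type estimate $|\cL(H;J,\rho)| \leq 2\sqrt{I(J,\rho)}\,\|H\|_{\rho,+}$ for every admissible smooth $H$. Consequently $\cL(\,\cdot\,;J,\rho)$ extends by continuity to a bounded linear functional on $\cH_+$.

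To pin down the boundary traces $\rho(t,\pm 1) = \rho_\pm(t)$, I would test against functions of the form $H(t,y) = \varphi(t)\chi_\ve(y)$, where $\chi_\ve$ is a smooth function supported in an $\ve$-neighborhood of $y=\pm 1$ (and adjusted so that $H(\cdot,-1)=0$). A distributional integration by parts of $-\int \partial_{yy} H\,\rho\,dy$ isolates a boundary contribution of order $1/\ve$ proportional to $\rho_\pm(t) - \rho(t,\pm 1)$, while $\|H\|_{\rho,+}^2 = O(1/\ve)$; the bound from the first step then forces the trace mismatch to vanish. Once the boundary conditions are in hand, integration by parts in time (using $J(0)=0$) and in space turns $\cL(H;J,\rho)$ into
\begin{equation*}
\cL(H;J,\rho) = \int_0^T\!\!\int_{-1}^1 \partial_y H(t,y)\,[\,J'(t) + \partial_y \rho(t,y)\,]\,dy\,dt,
\end{equation*}
valid in the sense of distributions. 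Applying Riesz representation on $\cH_+$ produces a unique $\bar H \in \cH_+$ such that $\cL(H;J,\rho) = 2\int\!\!\int \partial_y H\,\partial_y\bar H\,\phi(\rho)\,dy\,dt$; comparing the two expressions identifies the distribution $J'+\partial_y\rho$ with the function $2\phi(\rho)\partial_y\bar H$, which belongs to $\cH_-$ since $\int\!\!\int (2\phi(\rho)\partial_y\bar H)^2 \phi(\rho)^{-1}\,dy\,dt = 4\|\bar H\|_{\rho,+}^2 < \infty$. To separate the summands I exploit that $J'$ depends only on $t$: testing with $H$ linear in $y$ (so $\partial_y H$ is constant in $y$) isolates the distribution $J'$ paired against functions of $t$ alone, after which $\partial_y\rho$ is recovered as the difference and lies in $\cH_-$ as well.

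The explicit formula then follows from a standard Legendre computation: the supremum in \eqref{eq:24} is attained at $H=\bar H$, giving
\begin{equation*}
I(J,\rho) = \|\bar H\|_{\rho,+}^2 = \int_0^T\!\!\int_{-1}^1 (\partial_y\bar H)^2 \phi(\rho)\,dy\,dt = \frac 14 \int_0^T\!\!\int_{-1}^1 \frac{(J'(t)+\partial_y\rho(t,y))^2}{\phi(\rho(t,y))}\,dy\,dt,
\end{equation*}
which is \eqref{eq:50}, consistent with \eqref{eq:barh}. The main obstacle is the rigorous execution of the boundary-trace argument and the distributional integration by parts: since $\rho$ is a priori only in $L^\infty$ and $J$ merely in $\mathcal D([0,T],\bR)$, every manipulation has to be justified through carefully chosen test families whose scaling makes the cancellations quantitative. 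This bookkeeping is the technical heart of the proof; the rest is a rather formal Hilbert-space duality.
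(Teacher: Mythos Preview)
Your proposal is correct and uses the same ingredients as the paper: the Cauchy--Schwarz bound from the scaling $H\mapsto\lambda H$, Riesz representation on $\cH_+$, localized test functions near the boundary to force the trace condition, and the linear-in-$y$ test functions $H(t,y)=z(t)(1+y)$ to isolate $J'$. The only genuine difference is the order of operations.

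The paper first takes $H(t,y)=z(t)(1+y)$ and applies Riesz on an auxiliary one-variable Hilbert space $\mathcal N_+$ (functions of $t$ alone, weighted by $\bar\phi(t)=\int_{-1}^1\phi(\rho)\,dy$) to establish that $J'$ exists as a function \emph{before} anything else. Only then does it run the boundary-trace argument, and it does so on the rewritten functional
\[
\cL(H;J,\rho) = -\int_0^T\Big[\int_{-1}^1\partial_{yy}H\,(J'y+\rho)\,dy - \partial_yH(t,1)(\rho_++J') + \partial_yH(t,-1)(\rho_--J')\Big]dt,
\]
in which the $J$-terms have already been absorbed into the bulk. You instead do the boundary step first, with $J$ still appearing only through $H(T,1)J(T)-\int\partial_tH(t,1)J(t)\,dt$. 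This still works because for your test family these terms are of order $\chi_\ve(1)=O(1)$ while the mismatch term and $\|H\|_{\rho,+}$ scale like $1/\ve$ and $1/\sqrt{\ve}$ respectively; but you should make this explicit, since it is precisely the point where your ordering differs from the paper's. Once that is noted, the two arguments are interchangeable; the paper's ordering buys a slightly cleaner boundary computation, while yours avoids introducing the spaces $\mathcal N_\pm$ explicitly.
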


\begin{proof}
Choose $H(t,y) = z(t) (1+y)$ for a given smooth function $z(t)$ on $[0,T]$ in the variational formula 
\eqref{eq:24}. Then defining 
\begin{equation}
  \label{eq:15}
  \begin{split}
    \mathcal Q(J,\rho) = \sup_{z\in \mathcal C^1([0,T])} \Big\{ &2 z(T) J(T)  - \int_0^T \left(2 z'(t) J(t) dt + z(t)
        (\rho_+(t)- \rho_-(t)) \right)\\
    &  - \int_0^T z(t) ^2 \int_{-1}^1\phi(\rho(t,y)) dy \Big\}\\
    & := \sup_z \Big\{ \tilde L(J,z)  - \int_0^T dt \; z(t)^2 \int_{-1}^1\phi(\rho(t,y)) dy \Big\}
  \end{split}
\end{equation}
we have that $\mathcal Q(J,\rho) \le I(J,\rho) < +\infty$. 
It follows that
\begin{equation*}
  |\tilde L(J,z)|^2 \le 4 \mathcal Q(J,\rho) \int_0^Tdt\;  z(t)^2 \int_{-1}^1\phi(\rho(t,y)) dy.
\end{equation*}
This means that $z \to \tilde L(J,z)$ is a bounded linear functional on the Hilbert space $\mathcal N_+$, where
\begin{equation*}
   \mathcal N_{\pm} = \text{clos} \left\{ z\in \mathcal C^1([0,T]) : \|z\|^2 = \int_0^T  z(t)^2 \bar\phi(t)^{\pm 1} dt \right\}
\end{equation*}
where $\bar\phi(t) = \int_{-1}^1\phi(\rho(t,y)) dy$.
Applying Riesz representation theorem, there exists a function $g(t)\in  \mathcal N_+$ such that
\begin{equation*}
  \tilde L(J,z) = \int_0^T  g(t) z(t)  \bar\phi(t) dt.
\end{equation*}
This implies the existence of the weak derivative  $J'(t)$ such that 
$2 J'(t) + (\rho_+(t)- \rho_-(t)) \in \mathcal N_-$. Furthermore
$g(t) \bar\phi(t) = 2 J'(t) + (\rho_+(t)- \rho_-(t))$ and 
\begin{equation}
  \label{eq:31}
  \mathcal Q(J,\rho) = \frac 14 \int_0^T \left[ 2 J'(t) + (\rho_+(t)- \rho_-(t)) \right]^2\bar\phi(t)^{-1} \; dt.
\end{equation}

  The linear functional $\cL(H;J,\rho) $ {defined in \eqref{eq:24b}} is bounded by
  \begin{equation}
    \label{eq:46}
     \left| \cL( H; J,\rho) \right|^2 \le 4 I(J,\rho) 
  \int_0^T dt \int_{-1}^{1} dy \left(\partial_y H(t,y)\right)^2 \phi(\rho(t,y)), 
  \end{equation}
and it can be extended to a bounded linear functional on ${\cH_+} $.
By Riesz representation there exists a function $G\in {\cH_+} $ such that
\begin{equation}\label{eq:riesz}
  \cL(H; J,\rho) = \int_0^T dt \int_{-1}^{1} dy \; \partial_y  H(t,y) \partial_y G(t,y) \; \phi(\rho(t,y)). 
\end{equation}

Since we have already proven that $I(J,\rho) < +\infty$ implies the existence of the weak derivative 
$J'(t) \in \mathcal N_{-}$, 
we can rewrite the variational formula {\eqref{eq:24} as }
\begin{equation}
  \label{eq:24-b}
  \begin{split}
    I(J,\rho) =& +\infty\qquad 
    \text{{if $J(t)$ is not differentiable.}}\\
    I(J,\rho) = &\sup_{H\in \mathcal C^{1,2}([0,T]\times[-1,1]),\atop H(\cdot,-1) = 0}  
    \Big\{ \int_0^T dt H(t,1) J'(t) \\
    &+  \int_0^T dt \left(\partial_yH(t,1) \rho_+(t) - \partial_yH(t,-1) \rho_-(t) \right)  \\
    &-  \int_0^T dt \int_{-1}^{1} dy     \left[ \partial_{yy} H(t,y) \rho(t,y) + \left(\partial_y H(t,y)\right)^2 \phi(\rho(t,y))\right]
    \Big\} \\
     =& \sup_H \Big\{ \cL(H;J,\rho) - \int_0^T dt \int_{-1}^{1} dy \left(\partial_y H(t,y)\right)^2 \phi(\rho(t,y))\Big\}.
  \end{split}
\end{equation}
Notice that we can rewrite
\begin{equation*}
  \begin{split}
    \cL(H;J,\rho) = - \int_0^T dt \Big[\int_{-1}^1 \partial_{yy} H(t,y) \left(J'(t) y + \rho(t,y)\right) \; dy\\
    {\color{red}+} \partial_yH(t,1) \big(\rho_+(t) + J'(t)\big) - \partial_yH(t,-1) \big(\rho_-(t) - J'(t)\big) \Big]
  \end{split}
\end{equation*}
{Let us first show that $I(J,\rho) <\infty$ implies that $\rho(t, \pm 1) = \rho_{\pm}(t)$, in the sense that,
for any continuous function $g(t)$ on $[0,T]$,
  \begin{equation*}
    \begin{split}
    \lim_{\delta\to 0} \frac 1{\delta} \int_0^{T} dt \int_{-1}^{-1+\delta} dy \rho(t,y) g(t) = \int_0^T \rho_{-}(t) g(t) dt,
    \\
  \lim_{\delta\to 0} \frac 1{\delta} \int_0^{T} dt \int_{1-\delta}^1 dy \rho(t,y) g(t)  = \int_0^T \rho_{+}(t) g(t) dt.
\end{split}
  \end{equation*}
In fact assume that these boundary conditions are not satisfied, and choose the functions $H(s,y)$ such that 
$$
\partial_y H(t,y) =  A\left( 1 - \frac{1-y}{\delta} \right) 1_{[1-\delta,1]}(y) g(t)
$$
This function is not smooth, but it can smoothened up by some convolution without changing the argument.
On this function we have
\begin{equation*}
  \begin{split}
   \cL(H;J,\rho) = -\frac A{\delta} \int_0^{T} dt \; g(t) \int_{1-\delta}^1 dy  \left(J'(t) y + \rho(t,y)\right) 
   +  A \int_0^{T} g(t) \big(\rho_+(t) + J'(t)\big) \; dt \\
   =  \frac 12 A{\delta} \int_0^{T} g(t) J'(t) dt -  A \int_0^{T} dt g(t) \left(\delta^{-1} \int_{1-\delta}^1 dy \; \rho(t,y) -  \rho_+(t)\right)
 \end{split}
\end{equation*}
while we have
\begin{equation}
  \label{eq:84}
  \begin{split}
   \int_0^T dt \int_{-1}^{1} dy \left(\partial_y H(t,y)\right)^2 \phi(\rho(t,y)) \\
   = A^2 \int_0^{T} dt\; g(t)^2 \int_{1-\delta}^1 dy \left( 1 - \frac{1-y}{\delta} \right)^2 \phi(\rho(t,y))\\
   \le C A^2 T \delta.
 \end{split}
\end{equation}
Then if the boundary condition in $y=1$ is not satisfied, one can construct a sequence of functions 
$H$ such that $I(J,\rho) = +\infty$. The other side is treated in a similar way.
}

Thus  $I(J,\rho) <\infty$ 
implies that the boundary conditions must be satisfied and that there exist
the weak derivative $\partial_x \rho(t,x)$ and from \eqref{eq:riesz} 
we can identify $\partial_x G(t,x) \phi(\rho(t,x)) = J'(t) + \partial_x \rho(t,x)$.

Notice that, from this identification we can rewrite $I$ as 
\begin{equation}
  \label{eq:70}
  \begin{split}
    I(J,\rho) = \frac 14 \int_0^T \int_{-1}^1  \frac{J'(t)^2
      + \partial_y \rho(t,y)^2}{ \phi(\rho(t,y))} dy\; dt\\
    + \frac 12 \int_0^T  J'(t)  \log\left(\frac{\rho_+(t)(1-\rho_-(t))}{\rho_-(t)(1-\rho_+(t))}\right) dt
  \end{split}
\end{equation}
that proves that $J$ and $\rho$ are in $\cH_-$, since $\rho_\pm(t)$ are assumed bounded away from $0$ and $1$.
\end{proof}

\begin{prop}\label{prop2}
   If $ I(J,\rho)  < \infty$, there exists an approximation by bounded functions $J_\ve(t)$ and $x$-smooth  
   $\rho_\ve(t,x)$ of $J$ and $\rho$, such that $\rho_\ve(t, \pm 1) = \rho_{\pm}(t)$ and
 \begin{equation}
   \label{eq:45}
   \lim_{\ve\to 0} I(J_\ve, \rho_\ve) = I(J,\rho).
 \end{equation}
\end{prop}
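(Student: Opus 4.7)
The plan is to approximate $(J,\rho)$ in three stages, combining joint convexity of $I$ --- immediate from the supremum representation \eqref{eq:24}, since for each $H$ the functional $F_H(J,\rho) := \cL(H;J,\rho) - \int_0^T\!dt\!\int_{-1}^1\!dy\,(\partial_y H)^2\phi(\rho)$ is affine in $J$ and, in $\rho$, affine plus convex (the convex piece coming from $-\phi$, since $\phi$ is concave), hence jointly convex --- with lower semicontinuity of $I$ under uniform convergence of $\rho$ and of $J$. The three stages are: (i) move $\rho$ away from $\{0,1\}$ by convex combination with the stationary profile $\bar\rho$; (ii) smooth $\rho$ in $y$ while preserving the Dirichlet values $\rho_\pm(t)$; (iii) truncate $J'$ so that $J_\varepsilon$ is Lipschitz, hence bounded.

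\textbf{Stage (i).} With $\bar\rho(t,y)$ as in \eqref{eq:2} --- $C^\infty$ in $y$, $C^1$ in $t$, valued in $[a,1-a]$, and matching $\rho_\pm(t)$ at $y=\pm 1$ --- set $\rho^{(1)}_\varepsilon = (1-\varepsilon)\rho + \varepsilon\bar\rho$. Then $\rho^{(1)}_\varepsilon(t,\pm 1)=\rho_\pm(t)$, and by concavity of $\phi$, $\phi(\rho^{(1)}_\varepsilon) \ge \varepsilon a(1-a)$. From \eqref{eq:rate}, $I(J,\bar\rho)<+\infty$ follows from $\phi(\bar\rho)\ge a(1-a)$, boundedness of $\partial_y \bar\rho$, and $\mathcal N_- \hookrightarrow L^2(dt)$ (a consequence of $\bar\phi(t)\le 1/2$). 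Joint convexity gives $I(J,\rho^{(1)}_\varepsilon) \le (1-\varepsilon) I(J,\rho) + \varepsilon I(J,\bar\rho)$, so $\limsup_{\varepsilon\to 0} I(J,\rho^{(1)}_\varepsilon) \le I(J,\rho)$; since $\rho^{(1)}_\varepsilon \to \rho$ uniformly, each $F_H(J,\rho^{(1)}_\varepsilon)\to F_H(J,\rho)$, and the matching $\liminf$ bound comes from lower semicontinuity.

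\textbf{Stage (ii).} Write $\rho^{(1)}_\varepsilon = \bar\rho + v_\varepsilon$ with $v_\varepsilon(t,\pm 1)=0$. Smooth $v_\varepsilon$ in $y$ by a Dirichlet-compatible mollification --- for instance the heat semigroup on $[-1,1]$ with homogeneous Dirichlet conditions run for time $\delta$ --- obtaining $v_{\varepsilon,\delta}$ which is $C^\infty$ in $y$, still vanishes at $y=\pm 1$, and satisfies $\partial_y v_{\varepsilon,\delta}\to\partial_y v_\varepsilon$ in $L^2([0,T]\times[-1,1])$ as $\delta\to 0$. Setting $\rho^{(2)}_{\varepsilon,\delta} = \bar\rho + v_{\varepsilon,\delta}$, for $\delta$ small the quantity $\phi(\rho^{(2)}_{\varepsilon,\delta})$ remains bounded below (by, say, $\varepsilon a(1-a)/2$) and the boundary values are preserved. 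Dominated convergence applied to \eqref{eq:rate} yields $I(J,\rho^{(2)}_{\varepsilon,\delta}) \to I(J,\rho^{(1)}_\varepsilon)$ as $\delta\to 0$.

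\textbf{Stage (iii) and conclusion.} Set $J'_M(t) = (-M)\vee(J'(t)\wedge M)$ and $J_M(t)=\int_0^t J'_M(s)\,ds$; then $J_M$ is Lipschitz (hence bounded on $[0,T]$) and $J_M\to J$ uniformly. Since $\phi(\rho^{(2)}_{\varepsilon,\delta})$ is bounded below, $J'_M\to J'$ in $L^2$ together with \eqref{eq:rate} gives $I(J_M,\rho^{(2)}_{\varepsilon,\delta}) \to I(J,\rho^{(2)}_{\varepsilon,\delta})$. A diagonal extraction $\varepsilon_n,\delta_n\to 0$, $M_n\to\infty$ then produces the required sequence with $I(J_{M_n},\rho^{(2)}_{\varepsilon_n,\delta_n}) \to I(J,\rho)$. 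The main technical obstacle is preserving the boundary values $\rho_\varepsilon(t,\pm 1)=\rho_\pm(t)$ under spatial smoothing: a symmetric mollifier centered at $y=\pm 1$ does not give back $\rho_\pm(t)$. This is handled by mollifying only the deviation $v=\rho-\bar\rho$, which vanishes at $y=\pm 1$ by construction, using a smoothing operator compatible with homogeneous Dirichlet conditions (Dirichlet heat semigroup, or convolution followed by a linear boundary correction).
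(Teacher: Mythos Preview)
Your three--stage scheme (convex interpolation with $\bar\rho$; spatial smoothing of $\rho-\bar\rho$ by a Dirichlet-compatible operator; truncation of $J'$) is exactly the strategy of the paper. The paper cites \cite{BLM} for the idea and carries it out with the Dirichlet \emph{resolvent} $R_\ve^D=(I-\ve\Delta_D)^{-1}$ in place of your Dirichlet heat semigroup, and with monotone convergence (rather than $L^2$ convergence) for the $J'$ truncation. These are cosmetic differences.

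Two points where the paper is more explicit than your write-up, and which you should tighten: (a) the lower bound $\phi(\rho^{(2)}_{\ve,\delta})\ge c_\ve>0$ is not simply a consequence of ``$\delta$ small'', since $\rho^{(1)}_\ve$ is merely measurable in $y$ and the semigroup need not converge pointwise. The paper handles the analogue by writing $\rho_\ve=(\bar\rho-R_\ve^D\bar\rho)+R_\ve^D\rho$ and using positivity plus the maximum principle to bound each piece; the same decomposition works verbatim for $P_\delta^D$ since $\bar\rho$ is harmonic. (b) For the convergence $I(J,\rho^{(2)}_{\ve,\delta})\to I(J,\rho^{(1)}_\ve)$ the paper exploits the intertwining $\partial_y R_\ve^D=R_\ve^N\partial_y$ with the Neumann resolvent (a probability kernel) to get, via Jensen, a pointwise dominating function. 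Your heat-semigroup version has the same intertwining $\partial_y P_\delta^D=P_\delta^N\partial_y$, so the argument transfers; alternatively your ``$L^2$ convergence of $\partial_y v_{\ve,\delta}$ plus uniform lower bound on $\phi$'' gives $L^1$ convergence of the numerator and then DCT on the remaining factor. Either way the step deserves a line of justification rather than a bare appeal to dominated convergence.
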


\begin{proof}
We  follow a similar argument used in \cite{BLM}.


We proceed in two steps. We first define an approximation $\tilde\rho_\ve(t,x)$ bounded away from $0$ and $1$ 
such that $\phi(\tilde\rho_\ve(t,x)) \ge C\ve^2$ and 
$I(J,\tilde\rho_\ve) \longrightarrow I(J,\rho)$. Then assuming that $\phi(\rho(t,x))\ge a>0$, we construct a 
smooth approximation $\rho_\ve(t,x)$ such that $I(J,\rho_\ve) \longrightarrow I(J,\rho)$.
The conclusion follows then by a diagonal argument.

For the first step, define (recall \eqref{eq:2})
\begin{equation}
  \label{eq:49}
  \tilde\rho_\ve(t,x) = (1-\ve) \rho(t,x) + \ve \bar\rho(t,x).
\end{equation}
Notice that $\tilde\rho_\ve(t,\pm 1) = \rho_{\pm}(t)$ and 
$$
\phi(\tilde\rho_\ve(t,x)) \ge \ve^2 \left(\rho_{-}(t) \wedge \rho_{+}(t)\right)\left( 1 - \left(\rho_{-}(t) \vee \rho_{+}(t)\right)\right) .
$$
{$I(J,\rho)$ is convex and lower semicontinuous in $\rho$
  since it is a $\sup$  of continuous and convex functions of $\rho$.} Consequently we have
\begin{equation}
  \label{eq:51}
  I(J,\tilde\rho_\ve) \ \le \ (1-\ve)I(J,\rho) + \ve I(J,\bar\rho) \ 
  \mathop{\longrightarrow}_{\ve\to 0}\  I(J,\rho),
\end{equation}
while by lower semicontinuity
\begin{equation}
  \label{eq:52}
  \liminf_{\ve \to 0}  I(J,\tilde\rho_\ve) \ \ge \  I(J,\rho),
\end{equation}
that concludes the first approximation step.

Now assuming that $\phi(\rho(t,x))\ge \phi(a)>0$, i.e. $a< \rho(t,x) < 1-a$, we
 construct a smooth approximation $\rho_\ve(t,x)$ such that
\begin{equation*}
  \rho_\ve(t,x) \ \mathop{\longrightarrow}_{\ve\to 0} \rho(t,y), \qquad  \rho_\ve(t,\pm 1) = \rho_{\pm}(t).
\end{equation*}
and such that $I(J,\rho_\ve) \longrightarrow I(J,\rho)$.

Let $\Delta_D$ be the laplacian on $[-1, 1]$ with Dirichlet boundary conditions, and
\begin{equation*}
  R_\ve^D(x,y) = \left(I - \ve \Delta_D\right)^{-1}(x,y).
\end{equation*}
Then define
\begin{equation}
  \label{eq:37}
  \rho_\ve(t,x) \ = \bar\rho(t,x) + \int_{-1}^1  R_\ve^D(x,y) \left(\rho(t,y) - \bar\rho(t,y) \right) dy.
\end{equation}
{
%
We next prove }that there exist $a_*>0$ such that
\begin{equation}
  \label{eq:53}
  \phi( \rho_\ve(t,x)) \ge \phi(a_*).
\end{equation}

In fact notice that 
$0\le \bar\rho(t,x) - \int_{-1}^1  R_\ve^D(x,y)  \bar\rho(t,y) dy \le 1$, 
that implies
\begin{equation*}
   \rho_\ve(t,x) \ \ge \ \int_{-1}^1  R_\ve^D(x,y) \rho(t,y) dy \ge a \int_{-1}^1  R_\ve^D(x,y) dy \ge a',
\end{equation*}
for some positive $a'<a$.

Similarly we have
\begin{equation*}
  \begin{split}
    1- \rho_\ve(t,x) \ = 1 - \bar\rho(t,x) -  \int_{-1}^1 R_\ve^D(x,y) (1- \bar\rho(t,y) dy + \int_{-1}^1 R_\ve^D(x,y) (1-\rho(t,y)) dy
    \\
    \ge \int_{-1}^1 R_\ve^D(x,y) (1-\rho(t,y)) dy 
    \ge (1-a)  \int_{-1}^1 R_\ve^D(x,y) dy \ge (1-a'').
  \end{split}
\end{equation*}
Then choosing $a_* = a' \wedge a''$ we obtain \eqref{eq:53}.


Since $\rho_\ve$ is smooth, by \eqref{eq:50}, we have:
\begin{equation}
  \label{eq:40}
   I(J,\rho_\ve) = \frac 14 \int_0^T dt \int_{-1}^1 dy\ \left(J'(t) + \partial_y \rho_\ve(t,y)\right)^2 \phi(\rho_\ve(t,y))^{-1}
\end{equation}

Let $R_\ve^N(x,y) = \left(I - \ve \Delta_N\right)^{-1}(x,y)$ the resolvent 
for the laplacian with Neumann boundary conditions,
then we have the property that
\begin{equation}
  \label{eq:38}
  \partial_x  R_\ve^D(x,y) = - \partial_y R_\ve^N(x,y),
\end{equation}
that implies
\begin{equation}
  \label{eq:39}
  \partial_x  \rho_\ve(t,x) \ =  \int_{-1}^1  R_\ve^N(x,y) \partial_y \rho(t,y) dy :=  (R_\ve^N \partial_y \rho)(t,x),
\end{equation}
because $R_\ve^N(x,y)$ is a probability kernel: $\int_{-1}^1 R_\ve^N(x,y) dy = 1$.

Since 
$R_\ve^N(x,y)\le 1$ and symmetric, by convexity we have
\begin{equation}
  \label{eq:42}
  \begin{split}
    \left(J'(t) + \partial_y \rho_\ve(t,y)\right)^2 \le \int_{-1}^1 R_\ve^N(x,y)
    \left(J'(t) + \partial_x \rho(t,x)\right)^2 dx\\
    \le \int_{-1}^1 \left(J'(t) + \partial_x \rho(t,x)\right)^2 dx \le I(J,\rho) 
  \end{split}
\end{equation}
Then we have 
\begin{equation}
  \label{eq:44}
  \frac{\left(J'(t) + \partial_y \rho_\ve(t,y)\right)^2}{ \phi(\rho_\ve(t,y))} \le \frac 1{\phi(a_*)} I(J,\rho) 
\end{equation}
Then by dominated convergence we have
\begin{equation*}
   \lim_{\ve\to 0}   \mathcal I(J,\rho_\ve) \ = \  \mathcal I(J,\rho).
\end{equation*}

It remains to prove the $J'$ approximation
 by bounded functions. Let us define
   \begin{equation}
 J_k(t) = \int_0^t J'(s) \wedge{k} \vee {(-k)} \ ds \label{eq:60}
\end{equation}
and let us assume that $\rho(t,x)$ is smooth in $x$ and bounded away from $0$ and $1$. Then it is clear that 
\begin{equation*}
   \frac{\left(J'(t) \wedge{k} \vee {(-k)}  + \partial_y \rho(t,y)\right)^2}{ \phi(\rho(t,y))} 
\end{equation*}
is not decreasing in $k$ for $k$ large enough. Then by monotone convergence  $I(J_k, \rho) \to I(J,\rho)$.
\end{proof}

Notice that the minimum of $I(J,\rho)$ is correctly achieved for 
 $J'(t) = \bar J'(t) = -\frac 12\left[ \rho_+(t) - \rho_-(t) \right] $ and $\rho(t,y) = \bar\rho(t,y)$.

Furthermore, from \eqref{eq:70}, we have that for any given $\rho$, $I(J,\rho)$ is a quadratic function of $J'$.
In particular we have the following Gallavotti-Cohen type of symmetry:
\begin{equation}
  \label{eq:72}
  I(-J,\rho) = I(J,\rho) - \int_0^T J'(t) \log \frac{\rho_+(t) (1 - \rho_-(t))}{\rho_-(t) (1 - \rho_+(t))} dt, 
\end{equation}
The rate function {for $\h_N$ alone is obtained, by contraction principle (cf. \cite{Var})}, minimizing over all possible $\rho$ 
satisfying the given boundary conditions:
\begin{equation}
  \label{eq:73}
  I(J) = \inf_{\rho(t,y): \rho(t,\pm 1) = \rho_\pm(t) } I(J,\rho),
\end{equation}
and also satisfy the symmetry relation
\begin{equation}
  \label{eq:74}
  I(-J) = I(J) - \int_0^T J'(t) \log \frac{\rho_+(t) (1 - \rho_-(t))}{\rho_-(t) (1 - \rho_+(t))} dt. 
\end{equation}

Since in \eqref{eq:70} there is no relation between $J(t)$ and $\rho(t,y)$, it is possible to exchange the $\inf_\rho$
 with the time integral and obtain 
\begin{equation}
  \label{eq:73}
  I(J) 
  = \int_0^T dt \inf_{\rho(y): \rho(\pm 1) = \rho_\pm(t) } 
  \int_{-1}^1 \frac{(J'(t) + \rho'(y))^2}{\phi(\rho(y))} dy,
\end{equation}
that proves \eqref{eq:83}. 



\section{The Large Deviation Theorem}
\label{sec:large-devi-theor}

Let us define the empirical density profile 
\begin{equation}
  \label{eq:48}
 {\pi_N(t,y) = \sum_{x=-N}^{N-1} \eta_x(t) 1_{[x,x+1)}(Ny) },    \qquad y\in[-1,1]
\end{equation}
that has values on $\mathcal M$, 
so that the couple $(\h_N(1), \pi_N)$ has values on $\Xi = \mathcal D\left([0,T], \bR\right)\times \mathcal M$. 

\begin{thmm}
\label{teo4.1}
  Under the dynamics generated by \eqref{0.1}, starting with an arbitrary configuration $\eta$,
 the couple $(\h_N(1), \pi_N)$ satisfy a large deviation principle 
with rate function $I(J,\rho)$, i.e.
\begin{itemize}
\item For any closed set $C \subset \Xi$
  \begin{equation}
    \label{eq:49u}
    \limsup_{N\to\infty} \frac{1}{N^{1+\alpha}} \log \mathbb P_\eta\left( (\h_N(1), \pi_N) \in C\right) \le -\inf_{(J,\rho)\in C} I(J,\rho),
  \end{equation}
\item For any open set $\mathcal O \subset \Xi$
  \begin{equation}
    \label{eq:49l}
    \liminf_{N\to\infty} \frac{1}{N^{1+\alpha}} \log \mathbb P_\eta\left( (\h_N(1), \pi_N) \in \mathcal O\right) 
    \ge -\inf_{(J,\rho)\in \mathcal O} I(J,\rho),
  \end{equation}
\end{itemize}

\end{thmm}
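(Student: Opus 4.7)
The plan is to follow the Kipnis--Olla--Varadhan scheme of \cite{KOV1989} (with the adaptations of \cite{BLM}) at the accelerated speed $N^{1+\alpha}$, treating jointly the current $\h_N$ and the density $\pi_N$. The variational formula \eqref{eq:24}--\eqref{eq:24b} already packages the duality that drives the upper bound via exponential martingales, and Proposition \ref{prop2} reduces the lower bound to sufficiently regular $(J,\rho)$ for which the optimizer $\bar H$ in \eqref{eq:barh} makes sense.

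For the upper bound, for each test function $H\in\mathcal C^{1,2}([0,T]\times[-1,1])$ with $H(\cdot,-1)=0$ I would form the exponential martingale
\begin{equation*}
M_N^H(t)=\exp\Bigl\{N^{1+\alpha}F_N^H(t;\h_N,\pi_N)-N^{1+\alpha}Q_N^H(t;\eta_\cdot)\Bigr\},
\end{equation*}
where $F_N^H$ is a discrete version of $\cL(H;J,\rho)$ obtained by integrating by parts in space against $\pi_N$, in time against $\h_N$, and using the conservation law \eqref{eq:claw}, and $Q_N^H$ is the compensator produced by $e^{-N^{1+\alpha}F_N^H}L_{N,t}e^{N^{1+\alpha}F_N^H}$. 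A Taylor expansion of the bulk exchange rates in $1/N$ and a parallel expansion of the boundary birth--death rates identify $Q_N^H$ to leading order with
\begin{equation*}
\int_0^T dt\,\frac{1}{N}\sum_x\bigl(\partial_yH(t,x/N)\bigr)^2\eta_t(x)\bigl(1-\eta_t(x+1)\bigr)
\end{equation*}
plus explicit boundary terms involving $\rho_\pm(t)$. A superexponential estimate at speed $N^{1+\alpha}$, using one-block and two-block bounds adapted from \cite{KOV1989} and exploiting the extra factor $N^\alpha$ in the time scale, replaces this local quantity by $\int_0^T\!\int_{-1}^1(\partial_yH)^2\phi(\pi_N)\,dy\,dt$. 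Markov's inequality applied to $M_N^H$, a supremum over a countable dense family of $H$'s, and exponential tightness of $\pi_N$ in the compact $\mathcal M$ and of $\h_N$ in $\mathcal D([0,T],\bR)$ (the latter using \eqref{2.10} together with martingale control of time oscillations) then yield \eqref{eq:49u}.

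For the lower bound, by Proposition \ref{prop2} it suffices to prove \eqref{eq:49l} at $(J,\rho)$ with $\rho$ smooth, bounded away from $\{0,1\}$, satisfying $\rho(t,\pm 1)=\rho_\pm(t)$, and $J'$ bounded; for such data the optimizer $\bar H$ of \eqref{eq:barh} is regular and $\bar H(\cdot,-1)\equiv 0$ can be arranged by absorbing a time-dependent constant. I would then introduce a weakly asymmetric perturbation $L^{\bar H}_{N,t}$ by tilting bulk exchange rates by $\exp\{\partial_y\bar H(t,x/N)/N\}$ and modifying the boundary birth--death rates so that, in analogy with \cite{dmo1}, the perturbed process has quasi-static hydrodynamic limit exactly $(J,\rho)$. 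A direct computation of the Radon--Nikodym derivative yields
\begin{equation*}
\frac{1}{N^{1+\alpha}}\,\mathrm{Ent}\bigl(P^{\bar H}_N\,\big\|\,P_N\bigr)\longrightarrow I(J,\rho),
\end{equation*}
and combined with a law of large numbers for $(\h_N,\pi_N)$ under $P^{\bar H}_N$ the standard entropy inequality gives \eqref{eq:49l}.

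The main obstacle I anticipate is the superexponential replacement at the unusual speed $N^{1+\alpha}$ near the boundaries, where reservoir interaction occurs at rate $N^{2+\alpha}$: in the bulk the block estimates have the necessary margin through the extra factor $N^\alpha$, but the boundary contributions to $Q_N^H$ must be handled separately, by a dedicated coupling that turns $\eta_t(\pm N)$ into $\rho_\pm(t)$ at the required exponential precision. A secondary subtlety is preserving throughout the argument the rigidity \eqref{2.10}, which forces the limit current $J(t)$ to be space-constant; this constraint must be reflected in the topology placed on $\Xi$ and in the exponential tightness estimate for $\h_N$, so that one never has to pay for large deviations of spatially inhomogeneous currents that cannot occur microscopically.
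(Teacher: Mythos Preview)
Your proposal is correct and follows essentially the same route as the paper: exponential martingales indexed by $H$ plus the superexponential replacement of Section~\ref{sec:super-estim} (including the separate boundary estimate \eqref{eq:43}) and exponential tightness for the upper bound, and for the lower bound the reduction via Proposition~\ref{prop2} to regular $(J,\rho)$, the weakly asymmetric tilt by $\bar H$ of \eqref{eq:barh}, the quasi-static law of large numbers under $Q_N$, and the relative entropy computation. The only minor deviation is that the paper handles the boundary replacement not by a coupling but by the same variational Dirichlet-form argument as in the bulk, using the boundary pieces $\mathfrak D_{\pm N,t,\rho_\pm(t)}$ of \eqref{dir1}.
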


\section{The superexponential estimate}
\label{sec:super-estim}

 One of the main steps in the proof of Theorem \ref{teo4.1} is  a super-exponential estimate which allows the replacement of local functions by functionals of the empirical density. 
Define
\begin{equation}
  \label{eq:25}
  V_{N,\ve}(t,\eta) = \sum_{x=-N}^{N-1} G(t,x/N) \left( \eta(x)(1-\eta(x+1)) - \phi(\bar\eta_{N,\ve}(x/N))\right)
\end{equation}
The local averages are defined in the bulk as
\begin{equation}
  \label{eq:26}
  \bar\eta_{N,\ve}(x/N) = \frac{1}{2N\ve+1}\sum_{|x'-x|\le \ve N} \eta(x'), \qquad |x| < N(1-\ve),
\end{equation}
and at the boundaries as
\begin{equation}
  \label{eq:27}
  \begin{split}
    \bar\eta_{N,\ve}(x/N) = \rho_+(t) 
     \qquad x\ge N(1-\ve),\\
   \bar\eta_{N,\ve}(x/N) =  \rho_-(t) 
   \qquad x\le -N(1-\ve).
  \end{split}
\end{equation}
{ Notice that $V_{N,\ve}(t,\eta)$ depends on $t$ not only by the function $G(t,y)$ but also by the special definition \eqref{eq:27}.}
 
In the following we use as reference measures
the inhomogeneous product measures
\begin{equation}
 \label{eq:3} 
 \mu_t(\eta)=\prod_{x=-N}^N\,  \bar \rho\left(\frac xN,t\right)^{\eta(x)}
 \left[1- \bar \rho\left(\frac xN,t\right)\right]^{1-\eta(x)}.
\end{equation}
 Observe that, since $\rho_\pm(t)$ are uniformly away from 0 and 1,  there is $C>0$ so that 
 \begin{equation}
\sup_t \sup_\eta \mu_t(\eta)\ge e^{-CN}.
\label{eq:80}
\end{equation}

\begin{prop} 
For any $\delta>0$ and initial configuration $\eta$
  \begin{equation}
    \label{eq:28}
    \lim_{\ve\to 0} \limsup_{N\to\infty} \frac 1{N^{1+\alpha}} \log 
    \bP_\eta\left(\left|\int_0^T V_{N,\ve}(t,\eta_t) dt \right| \ge N\delta\right) = -\infty.
  \end{equation}
\end{prop}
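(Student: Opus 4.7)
The plan is to follow the original Kipnis--Olla--Varadhan scheme, adapted to the time-inhomogeneous quasi-static setting. The starting point is the exponential Chebyshev inequality: for every $\theta>0$,
\[
\bP_\eta\Big(\int_0^T V_{N,\ve}(t,\eta_t)\, dt \ge N\delta\Big) \le e^{-N^{1+\alpha}\theta\delta}\;
\bE_\eta\Big[\exp\Big(N^\alpha\theta \int_0^T V_{N,\ve}(t,\eta_t)\, dt\Big)\Big].
\]
Since $\theta$ can be sent to infinity at the end, it suffices to prove that $N^{-(1+\alpha)}$ times the log of the expectation tends to $0$ as $N\to\infty$ and then $\ve\to 0$, for every fixed $\theta$. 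To control this expectation I would use a time-inhomogeneous Feynman--Kac bound, built by choosing an approximate principal eigenfunction $\psi_t(\eta)$ of the frozen generator $L_{N,t}+N^\alpha\theta V_{N,\ve}(t,\cdot)$ to manufacture a supermartingale. Using \eqref{eq:80} to bound the initial/final Radon--Nikodym ratios against $\mu_t$ by $e^{CN}$, this yields
\[
\log \bE_\eta[\,\cdots\,] \le CN + \int_0^T \Lambda_{N,\ve}(t,\theta)\,dt,
\]
where $\Lambda_{N,\ve}(t,\theta)$ is the top eigenvalue of the perturbed symmetrized generator acting on $L^2(\mu_t)$.

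By the standard variational characterization one has
\[
\Lambda_{N,\ve}(t,\theta) \le N^\alpha \sup_{f\ge 0,\, \int f\, d\mu_t=1}
\Big\{ \theta \int V_{N,\ve}(t,\eta)\, f(\eta)\, d\mu_t  - N^{2}\, D_{\mu_t}(\sqrt f)\Big\} + o(N^{1+\alpha}),
\]
where $D_{\mu_t}$ is the Dirichlet form of the symmetric part of $L_{\text{exc}}+L_{b,t}$ with respect to $\mu_t$, and the $o(N^{1+\alpha})$ absorbs the non-invariance of $\mu_t$ (see below). The crucial point is that the ratio of prefactors between the perturbation $N^\alpha\theta V$ and the Dirichlet form $N^{2+\alpha}D$ is exactly $N^2$, which is the ratio present in the ordinary diffusive hydrodynamic scaling. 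Consequently the classical one-block and two-blocks estimates of \cite{KOV1989} apply verbatim to the bulk terms of $V_{N,\ve}$; the boundary definition \eqref{eq:27} is designed precisely so that the replacement also holds near $\pm N$, where $\mu_t$ reduces to the Bernoulli measure of parameter $\rho_\pm(t)$ that is reversible for $L_{b,t}$. Taken together these give
\[
\lim_{\ve\to 0}\limsup_{N\to\infty} \frac{1}{N^{1+\alpha}} \int_0^T \Lambda_{N,\ve}(t,\theta)\,dt = 0
\]
for every fixed $\theta>0$. Combining this with the Chebyshev step and letting $\theta\to\infty$ yields the claim.

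The main technical obstacle is the time-inhomogeneity of the reference measure $\mu_t$ together with the non-reversibility of $L_{N,t}$ with respect to \emph{any} product Bernoulli measure whose density profile differs from $\bar\rho(\cdot,t)$. Concretely, when one differentiates $\bE_\eta[\,\cdots\,]$ via the Feynman--Kac/Girsanov machinery, extra terms of the form $\partial_t \log \mu_t(\eta)$ and the non-invariance correction $(L_{N,t}^* \mathbf 1)(\eta)$ are produced. Using $\rho_\pm\in C^1$ and the lower bound \eqref{eq:80}, each of these contributions is $O(N)$ uniformly in $(t,\eta)$ and hence negligible at the scale $N^{1+\alpha}$. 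Once those corrections are absorbed, the replacement step is truly quasi-static in nature: it is applied pointwise in $t$ against the \emph{frozen} product measure $\mu_t$ and then integrated, so that time dependence plays no further role.
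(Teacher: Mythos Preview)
Your plan coincides with the paper's: exponential Chebyshev, a Feynman--Kac $L^2(\mu_t)$ bound leading to the frozen variational problem $\sup_f\{\langle N^\alpha\theta V_{N,\ve},f^2\rangle_{\mu_t}+\langle f,L_{N,t}f\rangle_{\mu_t}\}$, and then the one-block/two-block replacement (the paper refers that last step to Theorem~3.1 of \cite{dmo1}).

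One correction to your bookkeeping: the non-invariance term is not $O(N)$ but $O(N^{1+\alpha})$. Indeed $(L_{N,t}^*\mathbf 1)(\eta)=N^{2+\alpha}\sum_x\big[\mu_t(\eta^{x,x+1})/\mu_t(\eta)-1\big]$, and after a summation by parts this is $N^{2+\alpha}\cdot O(N^{-1})=O(N^{1+\alpha})$, not $O(N)$. In the paper this appears as the cross term $\tilde B_N(t)$ in $\langle f,L_{N,t}f\rangle_{\mu_t}$, controlled via Young's inequality by $\tfrac{N}{2}\mathfrak D_t(f)+C$: half of the Dirichlet form is sacrificed and a constant $C$ independent of $\theta$ survives at scale $N^{1+\alpha}$. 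Hence the right target is not that $N^{-(1+\alpha)}\log\bE[\cdots]\to 0$ but merely that it is bounded by some $K$ not depending on $\theta$; this is enough since $-\theta\delta+K\to-\infty$ as $\theta\to\infty$.
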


\begin{proof}
By \eqref{eq:80}, it is enough to prove \eqref{eq:28} for the dynamics with initial distribution given by $\mu_0$. 
By exponential Tchebychev inequality we have for any $a>0$:
\begin{equation*}
  \begin{split}
    \bP_{\mu_0}\left(\left|\int_0^T V_{N,\ve}(t,\eta_t) dt\right| \ge N\delta\right) \le
    e^{-N^{1+\alpha}\delta a} 
 \bE_{\mu_0} \left( e^{a N^{\alpha}\left|\int_0^T V_{N,\ve}(t, \eta_t) dt\right|}\right)
\end{split}
\end{equation*}
By using that $e^{|x|} \le e^x +e^{-x}$, all we need to prove is that there exists $K < +\infty$ such that for all $a \in \mathbb R$
\begin{equation*}
  \limsup_{\ve\to 0} \limsup_{N\to\infty} \frac 1{N^{1+\alpha}} \log \bE_{\mu_0} \left( e^{N^{\alpha}\int_0^T aV_{N,\ve}(t, \eta_t) dt}\right) \le KT
\end{equation*}
To simplify notations, denote $\tilde V(t,\eta) = N^{\alpha} aV_{N,\ve}(t, \eta)$.
Consider the equation 
\begin{equation}
  \label{eq:8}
  \partial_s u(\eta,s) =  L_{N,T-s} u (\eta,s) + \tilde V(T-s,\eta) u(\eta,s), \qquad u(\eta,0) = 1,\quad 0\le s\le T.
\end{equation}
By Feynman-Kac formula
\begin{equation}
  \label{eq:6}
  u(\eta,T) = \mathbb E_\eta \left( e^{\int_0^T \tilde V(s, \eta_s) ds}\right) 
\end{equation}
Then
\begin{equation}
  \label{eq:9}
  \begin{split}
   \frac d{ds} \frac 12 \sum_{\eta} &u(\eta,s)^2 \mu_{T-s}(\eta) =
\frac 12 \sum_\eta u(\eta,s)^2 \frac {d\mu_{T-s}(\eta)}{ds} \\
&+ \sum_{\eta}  \left(u(\eta,s)
      L_{N,T-s} u(\eta,s) +\tilde V(T-s,\eta) u^2 (\eta,s)\right) \; \mu_{T-s}(\eta) 
 \end{split}
\end{equation}
and, since $\left|\frac {d\mu_{T-s}(\eta)}{ds}\right| \le CN \mu_{T-s}(\eta)$, this is bounded by 
\begin{equation*}
    \le \left(CN + \Gamma(s) \right) \sum_\eta u(\eta,s)^2 \mu_{T-s}(\eta),
\end{equation*}
where, setting $\|f\|_s^2 = \sum_\eta f(\eta)^2 \mu_{T-s}(\eta)$,
\begin{equation}
  \label{eq:30}
  \Gamma(s) = \sup_{f, \|f\|_s = 1} \left\{ \sum_\eta \tilde V(T-s, \eta) f^2(\eta) \mu_{T-s}(\eta) 
    + \sum_\eta f(\eta) L_{N,T-s} f(\eta) \mu_{T-s}(\eta)  \right\}.
\end{equation}
By Gronwall inequality and \eqref{eq:9} we have
\begin{equation}
  \label{eq:12}
\sum_\eta u(\eta,T)^2 \mu_0(\eta) \le e^{2\int_0^T\Gamma(s) ds + TCN} \sum_\eta u(\eta,0)^2 \mu_T(\eta) = e^{2\int_0^T\Gamma(t) dt + TCN}.
\end{equation}
Then using Schwarz inequality we get
\begin{equation}
  \label{eq:29}
  \bE_{\mu_{0}} \left( e^{\int_0^T \tilde V(\eta(t)) dt}\right) \le \,e^{\int_0^T \Gamma(t) dt + TCN}.
\end{equation}

The Dirichlet forms associated to the generator are 
	\begin{eqnarray}
	\nn
&&\mathfrak{D}_{x,t,\rho}(f)
= \frac 12 \sum_\eta \rho^{1-\eta(x)}(1-\rho)^{\eta(x)} [ f(\eta^{x})- f(\eta)]^2 \mu_t(\eta),\quad x=\pm N
    \\&&\mathfrak{D}_{ex,t}(f)=\frac 12\sum_\eta\sum_{x=-N}^{N-1}
\left(\nabla_{x,x+1} {f(\eta)}\right)^2 \mu_t(\eta)
    	\label{dir1}
		\end{eqnarray}
and we define 
$$
\mathfrak{D}_t(f) =
 \mathfrak{D}_{-N,t,\rho_-(t)}(f) +\mathfrak{D}_{N,t,\rho_+(t)}(f)+ \mathfrak{D}_{ex,t}(f).
$$

Observe that
   \begin{equation}
     \begin{split}
       \sum_\eta f(\eta) (L_{N,t} f)(\eta)
       \mu_t(\eta)=&
       - \frac 12 N^{2+\alpha} \sum_\eta\sum_{x=-N}^{N-1}
       \nabla_{x,x+1} f(\eta)
       \nabla_{x,x+1}(f \mu_t)(\eta) \\ 
       &- N^{2+\alpha} \left(\mathfrak{D}_{N,t,\rho_+(t)}(f) +
         \mathfrak{D}_{-N,t,\rho_-(t)}(f)\right)
     \end{split}
     \label{5.11a}
\end{equation}
and {
\begin{eqnarray}\nn
&&
    \sum_\eta\sum_{x=-N}^{N-1} \nabla_{x,x+1} f(\eta) \nabla_{x,x+1}( f \mu_t)(\eta)  
= \sum_\eta\sum_{x=-N}^{N-1}
\left(\nabla_{x,x+1} f \right)^2 \mu_t (\eta)
 \\&&\hskip2.9cm\nn
+  \sum_\eta\sum_{x=-N}^{N-1} 
f(\eta^{x,x+1}) \nabla_{x,x+1} f (\eta)
\nabla_{x,x+1}\mu_t(\eta)
\\ &&\hskip2.7cm =2\mathfrak{D}_{ex,t}(f) + \frac 1N \tilde B_N(t) {\
  +\ O(N^{-1})} 
  \label{5.11b}
\end{eqnarray}
where}
\begin{equation*}
\tilde B_N(t) = \sum_\eta\sum_{x=-N}^{N-1}
f(\eta^{x,x+1})\nabla_{x,x+1}f(\eta) 
  \left(\eta(x)- \eta(x+1)\right) B(\frac xN,t)
 \mu_t (\eta)
\end{equation*}
{and 
\begin{equation}
  \label{eq:11}
 B(\frac xN,t) = \frac{\bar \rho'(\frac xN,t)}{\bar \rho(\frac  xN,t) (1-\bar \rho(\frac xN,t))},\qquad \| B \|_\infty\le c\|\rho'\|_\infty
\end{equation}
where $c>0$ since  $\rho_\pm(t)$ are uniformly away from 0 and 1.}
By an elementary inequality we have: 
\begin{eqnarray*}
    \left|\tilde B_N(t) \right| &&\le \frac N2
    \mathfrak{D}_{ex,t}(f) + \frac 2{2N} \sum_\eta\sum_{x=-N}^{N-1}
 {f}(\eta^{x,x+1})^2  \left(\eta(x)- \eta(x+1)\right)^2 B(\frac xN,t)^2
 \mu_t(\eta) \\&&
 \le \frac N2
    \mathfrak{D}_{ex,t}(f) + \frac 1{N}
   \sum_{x=-N}^{N-1} B(\frac xN,t)^2  \sum_\eta {f^2} (\eta) \mu_t (\eta^{x,x+1}) \end{eqnarray*}
{ Since $\mu_t (\eta^{x,x+1})=F(x,t)\mu_t(\eta)$ with $F(x,t)$ 
a uniformly bounded function of $\bar \rho(\frac xN,t)$ and $\bar \rho(\frac {x+1}N,t)$ and $\eta$,
using \eqref{eq:11} we get}
\begin{equation}
  \left|\tilde B_N(t)\right| \le \frac N2 \mathfrak{D}_{ex}(f) + C \le \frac N2 \mathfrak{D}_t(f) + C
  \label{5.11c}
\end{equation}
From \eqref{5.11a}, \eqref{5.11b} and \eqref{5.11c} we get
\begin{equation}
  \label{eq:32}
  \begin{split}
    \Gamma(s) \le \sup_f \left\{ N^\alpha a \sum_\eta V_{N,\ve}(T-s, \eta)
      f^2(\eta) \mu_{T-s}(\eta) - N^{2+\alpha}\frac 12 \mathfrak{D}_{T-s}(f)
    \right\} + N^{1+\alpha} C\\
    \le N^{1+\alpha} \sup_f \left\{ \frac aN \sum_\eta V_{N,\ve}(T-s,\eta)
      f^2(\eta) \mu_{T-s}(\eta) - N \frac 12 \mathfrak{D}_{T-s}(f)
    \right\} + N^{1+\alpha}  C
  \end{split}
\end{equation}
Then we are left to prove that for any $a$ and any $t>0$:
\begin{equation}
  \label{eq:33}
   \limsup_{\ve\to 0} \limsup_{N\to\infty} \sup_f 
   \left\{  \frac 1N  \sum_\eta aV_{N,\ve}(t, \eta) f^2(\eta) \mu_t(\eta) - \frac N2 \mathfrak{D}_t(f) \right\} \le C'
\end{equation}
This follows by proving that, for a suitable constant $C$
\begin{equation}
  \label{eq:19}
   \limsup_{\ve\to 0} \limsup_{N\to\infty} \sup_{ \mathfrak{D}_t(f) \le CN^{-1}} 
   \frac 1N  \sum_\eta V_{N,\ve}(t, \eta) f^2(\eta) \mu_t(\eta) \le 0
\end{equation}
The rest of the proof is identical as in theorem 3.1 in \cite{dmo1} (see pages 1051-2).
\end{proof}

With a similar argument follows also the super-exponential control of the densities at the boundaries:

\begin{prop} For any $\delta >0$ we have
  \begin{equation}
    \begin{split}
      \limsup_{\ve\to 0} \limsup_{N\to\infty} \frac {1}{N^{1+\alpha}} \log  \bP_\eta
      \left( \int_0^T \left| \frac 1{\ve N} \sum_{x= \pm N}^{\pm N(1\mp\ve)} \eta_t(x) -
          \rho_{\pm}(t)\right| \ge \delta\right) = -\infty
    \end{split}
\label{eq:43}
\end{equation}

\end{prop}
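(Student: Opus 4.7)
The plan is to mirror the proof of the previous proposition, replacing the bulk functional $V_{N,\ve}$ by the boundary functional
\begin{equation*}
W_{N,\ve}^{+}(t,\eta)\;=\;\frac 1{\ve N}\sum_{x=\lceil N(1-\ve)\rceil}^{N}\eta(x)\;-\;\rho_+(t)
\end{equation*}
and its mirror near $-N$. First I would reduce the initial configuration to $\mu_0$ via \eqref{eq:80}, apply exponential Chebyshev at rate $aN^\alpha$, then the Feynman--Kac representation and the Gronwall step leading to \eqref{eq:32}, and finally split the absolute value inside \eqref{eq:43} into its two signs by a union bound. This reduces the statement to showing that, for each $a>0$ and each $t\in[0,T]$,
\begin{equation*}
\limsup_{N\to\infty}\sup_{\mathfrak D_t(f)\le C/N,\ \sum_\eta f^2\mu_t=1}\frac 1N\sum_\eta W_{N,\ve}^{\pm}(t,\eta)\,f^2(\eta)\,\mu_t(\eta)\;\le\;0,
\end{equation*}
uniformly in $\ve$.

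Focusing on the right boundary (the left case is symmetric), I would use the telescoping identity $\eta(x)-\rho_+(t)=\sum_{j=x}^{N-1}(\eta(j)-\eta(j+1))+(\eta(N)-\rho_+(t))$ to split the target into a boundary piece and a sum of bond pieces. For the boundary piece, since $\mu_t$ has marginal $\mathrm{Bernoulli}(\rho_+(t))$ at site $N$, the boundary generator $L_{b,t}$ is self-adjoint under $\mu_t$; pairing $\eta\leftrightarrow\eta^N$ and applying Cauchy--Schwarz should give
\begin{equation*}
\Bigl|\sum_\eta(\eta(N)-\rho_+(t))\,f^2(\eta)\mu_t(\eta)\Bigr|\;\le\;2\sqrt{\mathfrak D_{N,t,\rho_+(t)}(f)}\;\le\;2\sqrt{C/N},
\end{equation*}
whose contribution after division by $N$ is $O(N^{-3/2})$. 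For each bond $(j,j+1)$, the analogous pairing argument combined with the $C^1$ smoothness of $\bar\rho$ (which makes $\mu_t$ and its exchange differ by $O(N^{-1})$) yields $|\sum_\eta(\eta(j)-\eta(j+1))f^2\mu_t|\le C\sqrt{\mathfrak D_{j,j+1,t}(f)}+O(N^{-1})$, where $\mathfrak D_{j,j+1,t}(f)=\tfrac12\sum_\eta(\nabla_{j,j+1}f)^2\mu_t$ is the local bond Dirichlet form. Telescoping over the block of size $\ve N$ with weights bounded by $\ve N$, and invoking Cauchy--Schwarz $\sum_j\sqrt{\mathfrak D_{j,j+1,t}(f)}\le\sqrt{\ve N}\sqrt{\mathfrak D_{ex,t}(f)}\le\sqrt{\ve N}\sqrt{C/N}=C\sqrt\ve$, the total bond contribution to $\tfrac 1N\sum_\eta W_{N,\ve}^+f^2\mu_t$ will be $O(\sqrt\ve/N)+O(\ve/N)$, vanishing as $N\to\infty$.

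The main obstacle is the power-counting in the telescoped sum: the boundary Dirichlet form yields only a gain of order $\sqrt{1/N}$, and this gain must propagate across a block of length $\ve N$ without blowing up; the $\sqrt\ve$ produced by the Cauchy--Schwarz step above is exactly what makes this work. In contrast with the bulk super-exponential estimate, the bound here is already $o(1)$ as $N\to\infty$ for each fixed $\ve$, so no cancellation in the iterated limit $\ve\to 0$ is needed, which is why the statement of the proposition reduces in practice to a single limit.
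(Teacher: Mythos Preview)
Your overall strategy is exactly what the paper has in mind (the paper itself gives no details beyond ``with a similar argument''): reduce to $\mu_0$, exponential Chebyshev, Feynman--Kac/Gronwall, and then a variational bound controlled by the Dirichlet form, using the boundary piece of $\mathfrak D_t$ to pin $\eta(N)$ to $\rho_+(t)$ and the exchange piece to propagate this across the block. The telescoping/Cauchy--Schwarz computation you sketch is the right one.

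There is, however, a bookkeeping error that propagates through the whole write-up. The functional $W_{N,\ve}^+$ is $O(1)$ (it is a \emph{block average} minus $\rho_+$) and the event in \eqref{eq:43} has threshold $\delta$, not $N\delta$. So the exponential Chebyshev must be taken at rate $aN^{1+\alpha}$, not $aN^{\alpha}$; with your choice the leading term $-a\delta N^{\alpha}/N^{1+\alpha}\to 0$ and nothing is proved. With the correct rate, the analogue of \eqref{eq:32}--\eqref{eq:19} becomes
\[
\limsup_{\ve\to 0}\ \limsup_{N\to\infty}\ \sup_{\mathfrak D_t(f)\le C/N,\ \|f\|_t=1}\ \sum_\eta W_{N,\ve}^{+}(t,\eta)\,f^2(\eta)\,\mu_t(\eta)\ \le\ 0,
\]
\emph{without} the extra factor $1/N$ you carry. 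Redoing your estimates at this scale: the boundary piece gives $O(N^{-1/2})$; the Cauchy--Schwarz over the $\ve N$ bonds gives $\sqrt{\ve N}\sqrt{\mathfrak D_{ex,t}(f)}\le C\sqrt{\ve}$; and the inhomogeneity correction ($O(N^{-1})$ per bond, $\le\ve N$ bonds with weights $\le 1$) gives $O(\ve)$. Hence the bound is $O(N^{-1/2})+O(\sqrt\ve)$, and the outer limit $\ve\to 0$ \emph{is} needed --- your final paragraph claiming that the estimate is already $o(1)$ for fixed $\ve$ is not correct. Once these scalings are fixed, the argument is complete and matches the paper's intended proof.
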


\section{The exponential martingales}
\label{sec:expon-mart-}

We use the following notations: for $x=-N, \dots, N-1$
\begin{align}
	\label{eq:mob}
	\phi_-(\eta,x)  =\eta(x+1)(1-\eta(x)), \quad
	\phi_+(\eta,x)   =\eta(x)(1-\eta(x+1)),
\end{align}
and at the boundaries
\begin{equation}
  \label{eq:20}
  \begin{split}
    \phi_-(\eta,-N-1,t) &:= (1-\rho_{-}(t))\eta(-N)\\
    \phi_+(\eta,-N-1,t) &:= \rho_{-}(t))(1-\eta(-N))\\
    \phi_-(\eta,N,t) &:= \rho_{+}(t)(1-\eta(N))\\
    \phi_+(\eta,N,t) &:= (1-\rho_{+}(t))\eta(N) .
  \end{split}
\end{equation}

Given two functions $z_\pm (t,y)$, 
we associate the following exponential martingales, for $x=-N-1, \dots, N$
\begin{equation}
  \begin{split}
    \label{eq:17}
    &\mathcal E_\pm(z_\pm, x, T) = \\ 
    &\exp\Big\{  \int_0^T z_\pm(t,x/N)\; d\h_\pm(t,x) 
      - \int_0^T N^{2+\alpha}\left(e^{z_\pm(t,x/N)} -1\right) 	\phi_\pm(\eta_t,x)\; dt \Big\}\\
    &=  
    \exp\Big\{  z_\pm(T,x/N)\; \h_\pm(T,x)\\
    & \qquad   - \int_0^T \Big[ \partial_t z_\pm(t,x/N) \h_\pm(t,x) + 
      N^{2+\alpha} \left(e^{z_\pm(t,x/N)} -1\right) 	\phi_\pm(\eta_t,x) \Big] \; dt \Big\}
  \end{split}
\end{equation}
We now choose a smooth function $H(t,y), y\in [-1,1]$ such that $H(t,-1) = 0$ and we set 
\begin{equation}
z_+(t,y) ={H(t,y+1/N)-H(t,y)} = - z_-(t,y), \qquad -1\le y \le 1-1/N\label{eq:85}
\end{equation}
and at the boundaries 
\begin{equation}\label{eq:85b}
  z_+(t,-1-1/N) : = \frac 1N \partial_yH(t,-1), \quad   z_+(t,1) : = \frac 1N \partial_yH(t,1). 
\end{equation}

The martingales defined by \eqref{eq:17} are orthogonal, 
consequently taking the product we still have an exponential martingale equal to
\begin{equation}
  \label{eq:21}
  \begin{split}
    &\prod_{x=-N-1}^N \prod_{\sigma=\pm} \mathcal E_\sigma(z_\sigma, x, T) = \\
   & \exp\Big\{ \sum_{x=-N-1}^N \Big[ z_+(T,x/N)\; \h(T,x) - \int_0^T  \partial_t z_+(t,x/N) \h(t,x) dt \\
    &- N^{2+\alpha} \int_0^T \left(\left(e^{z_+(t,x/N)}-1\right)  \phi_+(\eta_t,x) + 
      \left(e^{-z_+(t,x/N)}-1\right)\phi_-(\eta_t,x) \right) \; dt\Big] \;\Big\}
  \end{split}
\end{equation}
For large $N$ we use Taylor approximation, for $x= - N, \dots, N-1$,
\begin{equation*}
  \begin{split}
    &\left(e^{z_+(t,x/N)}-1\right) \phi_+(\eta_t,x) +
    \left(e^{-z_+(t,x/N)}-1\right)\phi_-(\eta_t,x) \\
    &= -z_+(t,x/N)
    \left(\eta_t(x+1) - \eta_t(x)\right) + \frac 12 z_+(t,x/N)^2
    (\phi_+(\eta_t,x) + \phi_-(\eta_t,x) ) + O({z_+(t,x/N)^3}).
  \end{split}
\end{equation*}
and on the boundaries
\begin{equation*}
  \begin{split}
    &\left(e^{z_+(t,-1-1/N)}-1\right) \phi_+(\eta_t,-N-1,t) +
    \left(e^{-z_+(t,-1-1/N)}-1\right)\phi_-( \eta_t,-N-1,t) \\
    &= -z_+(t,-1-1/N)
    \left(\eta_t(-N) - \rho_-(t) \right) \\
    &+ \frac 12 z_+(t,-1-1/N)^2
    (\phi_+( \eta_t,-N-1,t) + \phi_-( \eta_t,-N-1,t) ) + O({z_+(t,-1-1/N)^3}).\\
  \end{split}
\end{equation*}

\begin{equation*}
  \begin{split}
    &\left(e^{z_+(t,1)}-1\right) \phi_+( \eta_t,N) +
    \left(e^{-z_+(t,1)}-1\right)\phi_-( \eta_t,N) \\
    &= - z_+(t,1)\left(\rho_+(t) - \eta(t, N)  \right) + \frac 12 z_+(t,1)^2
    (\phi_+( \eta_t,N) + \phi_-( \eta_t,N) ) + O({z_+(t,1)^3}).
  \end{split}
\end{equation*}

We can rewrite the exponential martingale \eqref{eq:21} as
\begin{equation}
  \label{eq:22}
  \begin{split}
      &\prod_{x=-N-1}^N \prod_{\sigma=\pm} \mathcal E_\sigma(z_\sigma, x, T) = \\
   & \exp\Big\{ \sum_{x=-N-1}^N \Big[ z_+(T,x/N)\; \h(T,x) - 
   \int_0^T  \partial_t z_+(t,x/N) \h(t,x) dt \big]\\
    & + N^{2+\alpha} \int_0^T \sum_{x=-N-1}^{N}\Big[ z_+(t,x/N) \left(\eta_t(x+1) - \eta_t(x)\right) \\
    &\qquad\qquad - \frac 12 z_+(t,x/N)^2 (\phi_+( \eta_t,x) + \phi_-( \eta_t,x) ) \Big] dt + O(N^{\alpha})\;\Big\}
  \end{split}
\end{equation}
where in the above expression we set the convention
 $\eta_t(N+1) := \rho_+(t)$ and $\eta_t(-N-1) := \rho_-(t)$.

After a summation by parts we have
\begin{equation}
  \label{eq:23}
  \begin{split}
     & \exp\Big\{ \sum_{x=-N-1}^N \Big[ z_+(T,x/N)\; \h(T,x) - 
   \int_0^T  \partial_t z_+(t,x/N) \h(t,x) dt \big]\\
    &+ N^{2+\alpha} \int_0^T \sum_{x=-N}^{N} \Big[ \left(z_+(t,\frac{x-1}N) - z_+(t,\frac{x}N)\right) \eta_t(x)\\
    &\qquad\qquad - \frac 12 z_+(t,x/N)^2 (\phi_+( \eta_t,x) + \phi_-( \eta_t,x) ) \Big] dt \\
    &\qquad\qquad  +N^{2+\alpha} \int_0^T \left( z_+(t,1) \rho_+(t) - z_+(t,-1-1/N) \rho_-(t) \right) dt
    + O(N^{\alpha})\;\Big\}
  \end{split}
\end{equation}
Using \eqref{eq:claw}
%
%
 and  the special expression for $z_+$, we have for the logarithm of \eqref{eq:23}
\begin{eqnarray*}
&& \sum_{x=-N}^N H(T,x/N) \left[\eta_T(x)   -  \eta_0(x) \right] 
  \\&&\qquad  + \left[H(T,1) + \frac 1N \partial_y H(t,1)\right] \h(T,N)
       +\frac 1N\partial_y H(T,-1) \h(T,-N-1) 
   \\&& - \int_0^T \sum_{x=-N}^N \partial_t H(t,x/N) [\eta_t(x) - \eta_0(x)] dt 
  \\&&- \int_0^T \left(\left[\partial_t H(t,1) + \frac 1N \partial_t\partial _y H(t,1)\right] \h(t,N)
       + \frac 1N \partial_t \partial_y H(t,-1) \h(t,-N-1)\right)  \; dt\\
  &&- N^{\alpha} \int_0^T\Big( \sum_{x=-N-1}^{N} \Big[ \partial_{yy} H(t,\frac{x}N) \eta_t(x)
     + \frac 12 \left(\partial_y H(t,\frac{x}{N})\right)^2 (\phi_+( \eta_t,x) + \phi_-( \eta_t,x) ) \Big]\\
  &&\qquad
     + N \left[\partial_yH(t,1) \rho_+(t) - \partial_yH(t,-1) \rho_-(t)\right]\Big)dt + O(N^{\alpha})
\end{eqnarray*}
Since we are interested only in the terms that have order $N^{1+\alpha}$, 
we can forget all terms of order $N$ in the above expression,
and the exponential martingale has the form:
\begin{equation}
  \label{eq:47}
  \begin{split}
    \exp &\Bigg[ N^{1+\alpha} \Big\{ H(T,1) \h_N(T,1)  - \int_0^T \partial_t H(t,1) \h_N(t,1) \; dt\\
    &- \int_0^T\Big(\frac 1N  \sum_{x=-N-1}^{N}  \partial_{yy} H(t,\frac{x}N) \eta_t(x)
  + \partial_yH(t,1) \rho_+(t) - \partial_yH(t,-1) \rho_-(t) \\
   & + \frac 1{2N} \sum_{x=-N-1}^{N} 
   \left(\partial_y H(t,\frac{x}{N})\right)^2 (\phi_+(\eta_t,x) + \phi_-(\eta_t,x) )  \Big)dt + O(N^{-(\alpha\wedge 1)})
   \Big\}\Bigg].
  \end{split}
\end{equation}

After the superexponential estimate proved in the previous section,
that also fix the densities at the boundaries,
it follows the variational formula for the rate function
given by \eqref{eq:24}.

\section{The upper bound}
\label{sec:upper-bound}

\subsection{Exponential tightness}
\label{sec:expon-tightn}

The  following proposition uses standard arguments and we give a proof for completeness in Appendix A:

\begin{prop}\label{exp-tight}
  There exist a sequence of compact sets $K_L$ in $\mathcal D([0,T],\bR)$ such that
  \begin{equation}
    \label{eq:69}
    \lim_{L\to\infty} \lim_{N\to\infty} \frac 1{N^{1+\alpha}} \log \mathbb P_{\eta} \left( \h_N(1) \in K_L^c \right) = -\infty. 
  \end{equation}
\end{prop}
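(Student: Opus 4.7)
The plan is to apply Puhalskii's criterion for exponential tightness in the Skorohod space $\mathcal D([0,T],\bR)$ at speed $N^{1+\alpha}$. By \eqref{2.10}, $\h_N(t,y)$ is uniformly $O(N^{-\alpha})$--close to $\h_N(t,1)$, so it suffices to establish, for the one--dimensional càdlàg process $t\mapsto\h_N(t,1)$, (a) a super--exponential sup--norm bound
\[
\lim_{L\to\infty}\limsup_{N\to\infty}\frac1{N^{1+\alpha}}\log\mathbb P\bigl(\sup_{t\le T}|\h_N(t,1)|\ge L\bigr)=-\infty,
\]
and (b) a super--exponential modulus estimate $\lim_{\delta\to 0}\limsup_{N}\frac1{N^{1+\alpha}}\log\mathbb P(\omega''_{\h_N}(\delta)\ge\epsilon)=-\infty$ for every $\epsilon>0$. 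From (a) and (b), a standard diagonal construction of the compact sets $K_L=\{\|f\|_\infty\le L\}\cap\bigcap_n\{\omega''(f,\delta_n)\le\eta_n\}$ (with null sequences $\delta_n,\eta_n$ chosen so that the union bound on the complement still tends to $-\infty$) yields the claim, since such sets are compact in the Skorohod topology by the Arzelà--Ascoli characterisation.

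For (a), I would specialise the exponential martingale identity of Section~\ref{sec:expon-mart-} to the simple affine test function $H(t,y)=\lambda(1+y)/2$, which satisfies $H(t,-1)=0$. Then \eqref{eq:47} gives a positive process
\[
\mathcal M_\lambda(t)=\exp N^{1+\alpha}\Bigl\{\lambda\,\h_N(t,1)-\int_0^t\mathcal F_\lambda(s)\,ds+o(1)\Bigr\},
\]
where $\mathcal F_\lambda(s)=\tfrac\lambda2(\rho_+(s)-\rho_-(s))+\tfrac{\lambda^2}{8N}\sum_x(\phi_++\phi_-)(\eta_s,x)$ is uniformly bounded by $C(|\lambda|+\lambda^2)$, and $\mathcal M_\lambda$ coincides up to multiplicative error $e^{o(N^{1+\alpha})}$ with the true exponential martingale \eqref{eq:21}. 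Doob's maximal inequality applied to the latter, together with the stopping time $\tau_L=\inf\{t:\h_N(t,1)\ge L\}$, yields
\[
\mathbb P\bigl(\sup_{t\le T}\h_N(t,1)\ge L\bigr)\le\exp\bigl\{-N^{1+\alpha}\bigl[\lambda L-CT(|\lambda|+\lambda^2)\bigr]+o(N^{1+\alpha})\bigr\},
\]
and the analogous bound with $\lambda<0$ handles the lower tail. Fixing $\lambda=\pm 1$ and letting $L\to\infty$ gives (a).

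For (b), the very same estimate applied to the increment $\h_N(t,1)-\h_N(s,1)$ on a subinterval $[s,s+\delta]$ (using the martingale restarted at $s$) produces
\[
\mathbb P\Bigl(\sup_{t\in[s,s+\delta]}|\h_N(t,1)-\h_N(s,1)|\ge\epsilon\Bigr)\le 2\exp\bigl\{-N^{1+\alpha}[\lambda\epsilon-C\delta(|\lambda|+\lambda^2)]+o(N^{1+\alpha})\bigr\}.
\]
Optimising over $\lambda$ (the balance $\lambda\sim\epsilon/(2C\delta)$) makes the bracket $\ge c\epsilon^2/\delta$, and a union bound over the $\lceil T/\delta\rceil$ subintervals $[k\delta,(k+1)\delta]$ gives $\limsup_N N^{-(1+\alpha)}\log\mathbb P(\omega_{\h_N}(\delta)\ge\epsilon)\le-c\epsilon^2/\delta\to-\infty$ as $\delta\to 0$, which implies the analogous bound for $\omega''\le\omega$. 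The main technical point to be checked carefully is that \eqref{eq:47} is only an \emph{approximate} martingale, the scaled exponent carrying an additive $O(N^{-(\alpha\wedge 1)})$ error; one must verify that after exponentiation this contributes only a multiplicative $e^{o(N^{1+\alpha})}$ factor, so that Doob's inequality applied to the genuine martingale \eqref{eq:21} still delivers the two displays above with an $o(N^{1+\alpha})$ correction that is absorbed when $L\to\infty$ and $\delta\to 0$.
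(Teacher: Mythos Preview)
Your proposal is correct and follows essentially the same route as the paper's proof in Appendix~A (Propositions~\ref{sexp-h} and~\ref{sexp-modh}): a sup--norm bound plus a modulus--of--continuity bound, each obtained from an exponential martingale and Doob's maximal inequality, then a union bound over $\lceil T/\delta\rceil$ subintervals with optimisation in the parameter. The only notable difference is that the paper specialises the \emph{exact} exponential martingale \eqref{eq:21} with the constant choice $z_+(t,x)=\beta/N$ (and works with the spatial average $\bar\h_N$ rather than $\h_N(\cdot,1)$), obtaining a genuine martingale $M_t=\exp\{N^{1+\alpha}(\beta\bar\h_N(t)-A_N(\beta,t))\}$ whose compensator is bounded by $CT(|\beta|+\beta^2)$ directly; this sidesteps entirely the approximation--error issue you flag for \eqref{eq:47}.
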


\subsection{Proof of the upper bound}
\label{sec:proof-upper-bound}

By \eqref{eq:47} we can rewrite the exponential martingale defined by \eqref{eq:22} as
\begin{equation}
  \label{eq:71}
  \begin{split}
    M^H_N = &\exp\Big[ N^{1+\alpha} \Big\{ H(T,1) \h_N(T,1)  - \int_0^T \partial_t H(t,1) \h_N(t,1) \; dt\\
      &- \int_0^T\Big(\int_{-1}^1 \pi_N(t,y) \partial_{yy} H(t,y) dy
      + \partial_yH(t,1) \rho_+(t) - \partial_yH(t,-1) \rho_-(t) \\
      & + \frac 1{2N} \sum_{x=-N-1}^{N} \left(\partial_y
        H(t,\frac{x}{N})\right)^2 (\phi_+(\eta_t, x) + \phi_-(\eta_t, x))
      \Big)dt + O(N^{-(\alpha\wedge 1)})\Big\}\Big].
    \end{split}
\end{equation}
where
\begin{equation*}
  \lim_{N\to\infty} \mathbb E^{\bP_\eta}\left(O(N^{-(\alpha\wedge 1)})\right) = 0. 
\end{equation*}
After applying the superexponential estimates of section \ref{sec:super-estim}, we have
\begin{equation*}
   \begin{split}
    M^H_N = &\exp\Big[ N^{1+\alpha} \Big\{ H(T,1) \h_N(T,1)  - \int_0^T \partial_t H(t,1) \h_N(t,1) \; dt\\
      &- \int_0^T\Big(\int_{-1}^1 \pi_N(t,y) \partial_{yy} H(t,y) dy
      + \partial_yH(t,1) \rho_+(t) - \partial_yH(t,-1) \rho_-(t) \\
      & + \int_{-1}^1 \left(\partial_y H(t,y)\right)^2 \phi(\bar\eta_{N,\ve}(t,y)) dy \Big)dt 
      + O_{\ve, N} + O(N^{-(\alpha\wedge 1)})\Big\}\Big].
    \end{split}
\end{equation*}

Then for every set $A \subset \Xi$ we have
\begin{equation*}
  1 = \mathbb E^{\bP_\eta}
  \left(  M^H_N \right) \ge e^{N^{1+\alpha}\sup_H \inf_{(J,\rho) \in A}  I(H, J, \rho)} \bP_\eta \left( (\h_N(1), \pi_N) \in A \right)
\end{equation*}
where
\begin{equation*}
  \begin{split}
    I(H,J,\rho) = &H(T,1) J(T)  - \int_0^T \partial_t H(t,1) J(t) \; dt\\
    &- \int_0^T\Big(\int_{-1}^1 \rho(t,y) \partial_{yy} H(t,y) dy
    + \partial_yH(t,1) \rho_+(t) - \partial_yH(t,-1) \rho_-(t) \\
    & \qquad + \int_{-1}^1 \left(\partial_y H(t,y)\right)^2 \phi(\rho(t,y)) dy \Big)dt.
  \end{split}
\end{equation*}
Using the lower semicontinuity of $I(J,\rho)$ and a standard argument 
(see \cite{Var}, lemma 11.3 or \cite{KLbook} lemma { A2.3.3}) 
we have for a compact set $C\subset \Xi$:
\begin{equation*}
  \limsup_{N\to\infty} \frac{1}{N^{1+\alpha}} \log \bP_\eta\left( (\h_N(1), \pi_N) \in C \right) \le - \inf_{(J,\rho) \in C} I(J,\rho).
\end{equation*}
The extension to closed set follows from the exponential compactness {proved in \eqref{eq:69}, see  \cite{KLbook} pag.271}.

\section{The lower bound}
\label{sec:lower-bound}

The proof of the lower bound follows a standard argument, consequently we will only sketch it here, 
since all the ingredients are  already proven.
It is enough to prove that given $(J, \rho)\in \Xi$ such that $I(J,\rho)<\infty$, 
then for any open neighbor $\mathcal O$ of it we have
\begin{equation}
  \label{eq:54}
  \liminf_{N\to\infty} \frac 1{N^{1+\alpha}} \log \bP_\eta \left((\h_N(1), \mu_N) \in \mathcal O\right) \ \ge \ - I(J,\rho).
\end{equation}
By proposition \ref{prop2}, we can assume $J$ and $\rho$ such that $J'(t)$ exists and is bounded, $\rho$ bounded away from $0$ and $1$,  and $\partial_y\rho(t,y)$ 
exists and is bounded.
Then we consider the weakly asymmetric  exclusion dynamics with drift given by 
\begin{equation}
  \label{eq:55}
  \partial_y \bar H(t,y) = \frac 12 \frac{J'(t) + \partial_y\rho(t,y)}{\phi(\rho(t,y))},
\end{equation}
more precisely, recalling the definition of $z(t,y)$ given by \eqref{eq:85} and \eqref{eq:85b},
 the jump rate from $x$ to $x+1$ at time $t$ is taken to be $N^{2+\alpha}e^{ z (t,x/N) }$,
%
and from $x+1$ to $x$ is given by $N^{2+\alpha}e^{-z (t,x/N) }$,
while at the boundaries the birth rates are given by $N^{2+\alpha}e^{\frac 1N \partial_y H (t,\pm 1) }$ and
the death rates by $N^{2+\alpha}e^{-\frac 1N \partial_y H (t,\pm 1) }$.

We call $Q_N$ the law of this weakly asymmetric process that start with the same initial condition $\eta$.
The Radon-Nykodyn derivative $\frac{dQ_N}{d\mathbb P_\eta}$ is given by \eqref{eq:21}. 

The quasi static limit for this process is the following:

\begin{prop}\label{wahl}
Let $\tilde Q_N$ the law on $\Xi$ of $(\h_N(1), \pi_N)$ under $Q_N$, then 
\begin{equation}
  \label{eq:61}
  \tilde Q_N \longrightarrow \delta_{(J,\rho)}
\end{equation}
 \end{prop}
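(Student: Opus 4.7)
The plan is to adapt the quasi-static hydrodynamic argument behind Theorem \ref{thm1} to the tilted dynamics $Q_N$, combined with the martingale identities of Section \ref{sec:expon-mart-} that govern the current.

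First, I would transfer the superexponential replacement from $P_N$ to $Q_N$. The Radon--Nikodym derivative \eqref{eq:21}, together with the Taylor expansion carried out in Section \ref{sec:expon-mart-}, shows that the relative entropy $H(Q_N \| P_N)$ is of order $N^{1+\alpha}$, so by the entropy inequality any $P_N$-event of superexponentially small probability at that scale also has vanishing $Q_N$-probability. Consequently the bulk and boundary replacements of Section \ref{sec:super-estim} remain valid under $Q_N$, allowing one to replace $\phi_\pm(\eta_t,x)$ by $\phi(\bar\eta_{N,\ve})$ and ultimately by $\phi(\pi_N)$, and to fix the boundary densities at $\rho_\pm(t)$.

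Second, I would identify any weak limit $\tilde\rho$ of $\pi_N$ as $\rho$. The tilt $z(t,x/N)=\bar H(t,(x+1)/N)-\bar H(t,x/N)$ is of order $1/N$, so after Taylor expansion of $e^{\pm z}$ to second order, for a test function $G\in C^{1,2}([0,T]\times[-1,1])$ vanishing on the boundary the Dynkin decomposition reads
\begin{equation*}
  \pi_N(t)[G] - \pi_N(0)[G]\ =\ N^{\alpha}\!\int_0^t\!\int_{-1}^{1}\!\Big(\partial_{yy}G\,\pi_N(s,y) - 2\,\partial_y G\,\partial_y\bar H\,\phi(\bar\eta_{N,\ve})\Big)dy\,ds + M^G_N(t) + o(N^{\alpha}).
\end{equation*}
The quadratic variation of $M^G_N$ is dominated by the boundary birth/death jumps and is of order $N^{\alpha}$, so $M^G_N=O(N^{\alpha/2})$. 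Since the left-hand side is $O(1)$, the integrand multiplying $N^\alpha$ must vanish in the limit: any limit point $\tilde\rho$ satisfies, for almost every $t$, the weak stationary equation $\partial_{yy}\tilde\rho - 2\partial_y[\phi(\tilde\rho)\,\partial_y\bar H]=0$ with boundary values $\rho_\pm(t)$. The explicit choice \eqref{eq:55} makes $\rho$ a classical solution, and since $\phi(\rho)$ is bounded away from $0$ and $\partial_y\bar H$ is bounded, uniqueness for this boundary value problem forces $\tilde\rho=\rho$.

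Third, an analogous computation for $\h_N$ gives the current. The instantaneous mean current through the bond $(x,x+1)$ under $Q_N$, after Taylor expansion and the superexponential replacement, equals $N^{1+\alpha}[-\partial_y\rho(t,x/N)+2\phi(\rho(t,x/N))\,\partial_y\bar H(t,x/N)]+o(N^{1+\alpha})$, which by the choice \eqref{eq:55} is exactly $N^{1+\alpha}J'(t)$. Combined with the conservation law \eqref{eq:claw} and the uniform bound \eqref{2.10}, this yields $\h_N(t,y)\to J(t)$ in $Q_N$-probability, uniformly in $y$, jointly with $\pi_N\to\rho$, proving \eqref{eq:61}. The main obstacle is the first step: one must either invoke the entropy inequality as above, or directly check that the Dirichlet-form estimate leading to \eqref{eq:19} still closes when the symmetric generator is replaced by its weakly asymmetric tilt---which it does, because the asymmetric part contributes a subleading correction to the Dirichlet form that is absorbed by the same Cauchy--Schwarz argument used in \eqref{5.11c}.
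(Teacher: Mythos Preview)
Your approach is essentially the same as the paper's: first transfer the superexponential replacement from $P_N$ to $Q_N$, then run the quasi-static hydrodynamic argument of \cite{dmo1} for the weakly asymmetric dynamics. The only difference is in how the transfer is done: the paper bounds $\mathbb E^{P_N}\big[(dQ_N/dP_N)^2\big]^{1/2}\le e^{cN^{1+\alpha}}$ and applies Schwarz to get $Q_N(A_{N,\ve})\le P_N(A_{N,\ve})^{1/2}e^{cN^{1+\alpha}}$, whereas you use the relative-entropy bound $H(Q_N\|P_N)=O(N^{1+\alpha})$ together with the entropy inequality; both routes are standard and yield the same conclusion. Your explicit identification of the limiting stationary equation and current via the Dynkin decomposition is exactly what the paper delegates to \cite{dmo1}, and the uniqueness step you invoke is the right one given that $\phi(\rho)$ is bounded away from $0$ and $\partial_y\bar H$ is bounded.
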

 
 \begin{proof}[Proof of proposition \ref{wahl}]
By \eqref{eq:63} we can extend the superexponential estimates contained in section \ref{sec:super-estim} to 
$Q_N$. {In fact 
we have that}
\begin{equation*}
  \mathbb E^{\bP_\eta} \left( \left(\frac{dQ_N}{d\bP_\eta}\right)^{2}\right)^{1/2} \le e^{cN^{1+\alpha}}
\end{equation*}
and by Schwarz inequality
\begin{equation*}
  Q_N \left( A_{N,\ve}\right) \le \mathbb P_\eta \left( A_{N,\ve}\right) e^{cN^{1+\alpha}}
\end{equation*}
where $A_{N,\ve} = \{\int_0^T V_{N,\ve}(t,\eta_t) dt \ge N\delta\}$, 
and \eqref{eq:28} extends immediately to $Q_N$.
At this point the proof of the quasi-static hydrodynamic limit follows similar to the one in \cite{dmo1}. \end{proof}

\bigskip{ We then write
\begin{equation*}
  \bP_\eta\left((\h_N(1), \pi_N) \in \mathcal O\right)=
  \mathbb E^{Q_N}\Big(\frac{d\mathbb P_\eta}{dQ_N}\mathbf 1_{(\h_N, \pi_N) \in \mathcal O}\Big)
\end{equation*}
Since $\mathcal O$ contains $(J,\rho)$, by Proposition \ref{wahl}, under $Q_N$ the probability of the event 
$(\h_N(1), \pi_N) \in \mathcal O$ is close to one. By Jensen inequality
\begin{equation*}
  \frac 1{N^{1+\alpha}} \log \mathbb P_\eta \left((\h_N(1), \pi_N) \in \mathcal O\right) \ \ge
  - \mathbb E^{Q_N}\Big( \frac 1{N^{1+\alpha}}\log\frac{dQ_N}{d\mathbb P_\eta}\Big)=-\frac   
1{N^{1+\alpha}}H(Q_N|\bP_\eta) 
\end{equation*}
where $H(Q_N|\bP_\eta)$ is the relative entropy of $Q_N$ with respect to $\bP_\eta$.

\bigskip
The lower bound is  then a consequence of the following Proposition.}
 \begin{prop}\label{relent}
Let $H(Q_N|\bP_\eta) = \mathbb E^{Q_N}\left(\log \frac{dQ_N}{d\bP_\eta}\right)$ 
the relative entropy of $Q_N$ with respect to $\bP_\eta$. Then 
   \begin{equation}
  \label{eq:59}
  \lim_{N\to\infty} \frac 1{N^{1+\alpha}} H(Q_N|\bP_\eta) = I(J,\rho),
\end{equation}
 \end{prop}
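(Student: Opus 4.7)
The plan is to compute $H(Q_N|P_N) = \mathbb E^{Q_N}[\log(dQ_N/dP_N)]$ directly from the explicit product formula \eqref{eq:21}, via the Girsanov-type compensation identity. Under $Q_N$, the counting processes $h_\sigma(\cdot, x)$ have intensities $N^{2+\alpha}e^{z_\sigma(t,x/N)}\phi_\sigma(\eta_t,x)$, with $z_- = -z_+$. Writing
\begin{equation*}
\log \mathcal E_\sigma(z_\sigma, x, T) = \int_0^T z_\sigma\, dh_\sigma - \int_0^T N^{2+\alpha}(e^{z_\sigma}-1)\phi_\sigma\, dt
\end{equation*}
and taking $Q_N$-expectations converts the stochastic integral into $\int N^{2+\alpha} z_\sigma e^{z_\sigma} \phi_\sigma\, dt$. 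Summing in $x$ and $\sigma$ yields
\begin{equation*}
H(Q_N|P_N) = N^{2+\alpha}\int_0^T dt \sum_{x=-N-1}^N \mathbb E^{Q_N}\!\big[F(z)\phi_+(\eta_t,x) + F(-z)\phi_-(\eta_t,x)\big],
\end{equation*}
where $z = z_+(t,x/N)$ and $F(u) := u e^u - e^u + 1 = u^2/2 + O(u^3)$ (and $F(-u) = u^2/2 + O(u^3)$ as well).

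Since $\bar H \in \mathcal C^{1,2}$ is bounded, one has $z = N^{-1}\partial_y \bar H(t,x/N) + O(N^{-2})$, so Taylor expansion reduces the above to
\begin{equation*}
\frac{N^\alpha}{2}\int_0^T dt \sum_x (\partial_y \bar H(t,x/N))^2\, \mathbb E^{Q_N}\!\big[\phi_+(\eta_t,x) + \phi_-(\eta_t,x)\big] + o(N^{1+\alpha}),
\end{equation*}
the cubic remainder being of order $N^{2+\alpha}\cdot N \cdot N^{-3} = N^{\alpha}$. Next I would invoke the superexponential estimate of Section \ref{sec:super-estim}, transferred to $Q_N$ through Cauchy--Schwarz and the bound $\mathbb E^{P_N}[(dQ_N/dP_N)^2]^{1/2} \le e^{cN^{1+\alpha}}$ (as already used in the proof of Proposition \ref{wahl}), to replace $\phi_+(\eta_t,x)+\phi_-(\eta_t,x)$ by $2\phi(\bar\eta_{N,\ve}(t,x/N))$. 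Combined with Proposition \ref{wahl}, the boundary estimate \eqref{eq:43}, and a standard Riemann sum approximation, this gives
\begin{equation*}
\lim_{N\to\infty}\frac{1}{N^{1+\alpha}} H(Q_N|P_N) = \int_0^T\!\!\int_{-1}^1 (\partial_y \bar H(t,y))^2 \phi(\rho(t,y))\, dy\, dt.
\end{equation*}
By the defining choice \eqref{eq:55}, $(\partial_y \bar H)^2 \phi(\rho) = (J'+\partial_y\rho)^2/(4\phi(\rho))$, so the right-hand side equals $I(J,\rho)$ by \eqref{eq:rate}, completing the identification.

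The principal difficulty is that all replacements above must be justified in $L^1(Q_N)$ rather than merely in $Q_N$-probability, since $H(Q_N|P_N)$ is an expectation of the log Radon--Nikodym derivative. The cubic Taylor remainder, the boundary contributions at $x = \pm N$ (where the rates are defined differently), and the local-to-macroscopic density replacement all demand uniform integrability under $Q_N$. The $L^2(P_N)$-bound on $dQ_N/dP_N$ is the key workhorse: it lets us upgrade any $P_N$-superexponentially small event to a $Q_N$-exponentially small event, which is exactly what is needed to control the tails appearing in $\mathbb E^{Q_N}$.
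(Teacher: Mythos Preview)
Your approach is essentially the same as the paper's, which dispatches the proposition in one line (``direct consequence of \eqref{eq:21} and Proposition~\ref{wahl}''); you have simply filled in the details the paper omits. Computing $\mathbb E^{Q_N}[\log(dQ_N/dP_N)]$ via the compensator identity under the tilted intensities, Taylor-expanding $F(u)=ue^u-e^u+1$, and then invoking the replacement lemma plus Proposition~\ref{wahl} is exactly the intended route.

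One remark: you over-worry about the $L^1(Q_N)$ versus in-probability issue. After the Taylor expansion, the integrand $\frac{1}{N}\sum_x (\partial_y\bar H(t,x/N))^2(\phi_+(\eta_t,x)+\phi_-(\eta_t,x))$ is \emph{uniformly bounded} (since $\bar H$ is $C^{1,2}$ and $\phi_\pm\le 1$), so convergence in $Q_N$-probability automatically upgrades to $L^1(Q_N)$-convergence by bounded convergence. The superexponential estimate is still needed to perform the replacement $\phi_++\phi_-\rightsquigarrow 2\phi(\bar\eta_{N,\ve})$, but once done, boundedness handles the rest; you do not need to squeeze uniform integrability out of the $L^2(P_N)$ bound on $dQ_N/dP_N$ at that stage. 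Also note the statement tacitly means $\lim_N N^{-(1+\alpha)}H(Q_N|P_N)=I(J,\rho)$, which is what you correctly compute.
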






The proof of proposition \ref{relent} is a direct consequence of \eqref{eq:21} and of Proposition \ref{wahl}.

\section{Appendix A: The exponential tightness}
\label{sec:append-a:-expon}

We prove here proposition \ref{exp-tight}. The arguments used here are just slight variations 
of the standard ones (e.g. section 10.4 in  \cite{KLbook}).
{SInce $\mathcal M$ is compact, we have only to control that the distribution of $\h_N(1,t)$ is  exponentially tight.}
This is consequence of the 
following 2 propositions.

\begin{prop}\label{sexp-h}
  \begin{equation}
    \lim_{L\to\infty} \lim_{N\to\infty} \frac 1{N^{1+\alpha}} \log \mathbb \bP_\eta \left(\sup_{0\le t\le T} |\h_N(t,1)| \ge L\right)
    = -\infty. 
\label{eq:63}
  \end{equation}
\end{prop}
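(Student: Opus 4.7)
The plan is to reduce to the spatially averaged current, decompose it into a deterministically bounded drift and a martingale, and then control the martingale by a Doob inequality applied to an exponential martingale chosen to capture the correct diffusive scaling. Set
$$\bar\h(t) := \frac{1}{2N}\sum_{x=-N}^{N-1}\h(t,x), \qquad \bar\h_N(t):=\bar\h(t)/N^{1+\alpha}.$$
By \eqref{2.10}, $\sup_y |\h_N(t,y)-\bar\h_N(t)|\le (2N)^{-\alpha}$, so it suffices to prove \eqref{eq:63} for $\sup_t |\bar\h_N(t)|$. Summing the single-bond martingale decompositions $\h(t,x) - N^{2+\alpha}\int_0^t(\eta_s(x)-\eta_s(x+1))\,ds$ and using the telescoping identity $\sum_{x=-N}^{N-1}(\eta_s(x)-\eta_s(x+1)) = \eta_s(-N)-\eta_s(N)$ (recall $\gamma=1$), we obtain
\begin{equation*}
  \bar\h(t) = \bar M_t + \frac{N^{1+\alpha}}{2}\int_0^t (\eta_s(-N)-\eta_s(N))\,ds,
\end{equation*}
where $\bar M_t$ is a mean-zero martingale with jumps $\pm 1/(2N)$ (only one interior bond jumps at a time). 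The drift term is deterministically bounded in absolute value by $TN^{1+\alpha}/2$, so $|\bar\h_N(t) - \bar M_t/N^{1+\alpha}|\le T/2$ pointwise.

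For any $\theta\in\bR$, $\mathcal{E}_t^\theta := \exp[\theta \bar M_t - C_t^\theta]$ is a mean-one martingale, where
\begin{equation*}
  C_t^\theta := N^{2+\alpha}\int_0^t \sum_{x=-N}^{N-1}\Bigl[\bigl(e^{\theta/(2N)}-1-\tfrac{\theta}{2N}\bigr)
    \phi_+(\eta_s,x) + \bigl(e^{-\theta/(2N)}-1+\tfrac{\theta}{2N}\bigr)\phi_-(\eta_s,x)\Bigr]ds.
\end{equation*}
Using $|e^{\pm u}-1\mp u|\le u^2 e^{|u|}$ together with $\sum_{x=-N}^{N-1}\phi_\pm(\eta,x)\le 2N$, one gets, for $|\theta|\le N$, the bound $C_t^\theta\le C_0\,\theta^2 N^{1+\alpha}$ with $C_0=C_0(T)$ a positive constant. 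Since $e^{\theta \bar M_t}=\mathcal{E}_t^\theta\, e^{C_t^\theta}$, Doob's maximal inequality $\bP(\sup_t \mathcal{E}_t^\theta\ge K)\le 1/K$ gives, for $\theta>0$,
\begin{equation*}
  \bP\bigl(\sup_{0\le t\le T}\bar M_t\ge a\bigr)\le \exp\bigl(-\theta a + C_0\,\theta^2 N^{1+\alpha}\bigr).
\end{equation*}
Optimizing in $\theta$, and using the symmetric bound for $-\bar M_t$,
\begin{equation*}
  \bP\bigl(\sup_{0\le t\le T}|\bar M_t|\ge \ell N^{1+\alpha}\bigr) \le 2\exp\Bigl(-\frac{\ell^2}{4C_0}N^{1+\alpha}\Bigr).
\end{equation*}

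Combining this with the deterministic bounds from the first step, for $L > T$,
\begin{equation*}
  \bP\bigl(\sup_{0\le t\le T}|\h_N(t)|\ge L\bigr)\le 2\exp\Bigl(-\frac{(L-T/2-o(1))^2}{4C_0}N^{1+\alpha}\Bigr),
\end{equation*}
and taking logarithms, dividing by $N^{1+\alpha}$, and sending first $N\to\infty$ and then $L\to\infty$ yields \eqref{eq:63}. The main technical point is the scaling in the exponential compensator: the jump size $1/(2N)$ forces $C_t^\theta$ to be of order $\theta^2 N^{1+\alpha}$, precisely the diffusive scale that produces Gaussian-type tails at the desired speed $N^{1+\alpha}$. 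Attempting the same argument on a single bond $\h(t,x)$ would yield a compensator of order $\theta^2 N^{2+\alpha}$, too large to close the estimate, so the averaging over $2N$ bonds in the first step is essential.
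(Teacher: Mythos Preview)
Your proof is correct and follows essentially the same strategy as the paper: reduce to the spatially averaged current $\bar\h_N$, build an exponential martingale with tilt of order $1/N$ per bond so that the compensator scales like $N^{1+\alpha}$, and finish with Doob's maximal inequality. The only cosmetic difference is that you first strip off the bounded drift $\tfrac12\int_0^t(\eta_s(-N)-\eta_s(N))\,ds$ and apply the exponential martingale to the centered part $\bar M_t$, whereas the paper keeps the drift inside and bounds the full compensator by $A_N(\beta,t)\le CT(|\beta|+\beta^2)$, the linear term in $\beta$ playing exactly the role of your deterministic $T/2$ bound.
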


\begin{prop} For any $\ve>0$:
  \label{sexp-modh}
\begin{equation}
  \lim_{\delta\to 0} \lim_{N\to\infty} \frac 1{N^{1+\alpha}} \log
  \mathbb \bP_\eta \left(\sup_{|t-s|\le \delta} |\h_N(t,1) - \h_N(s,1)| \ge \ve \right) = -\infty. 
\label{eq:63-c}
  \end{equation}
\end{prop}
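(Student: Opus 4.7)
The plan is to establish the modulus-of-continuity estimate by the standard Chebyshev/Doob strategy used in Section 10.4 of \cite{KLbook}, applied to the exponential martingales constructed in Section \ref{sec:expon-mart-}. First, by \eqref{2.10} we have $|\h_N(t,y) - \h_N(t,0)| \le 2 N^{-\alpha}$ uniformly in $t,y$, so it suffices to prove the same estimate for the single-bond quantity $\h_N(t) := N^{-(1+\alpha)}\h(t,0)$. Next, I discretize $[0,T]$ into $K \sim T/\delta$ consecutive intervals $[t_k, t_{k+1}]$ of length $\delta$. A bisection argument gives
\begin{equation*}
\sup_{|t-s|\le\delta}|\h_N(t) - \h_N(s)| \le 4\max_{0\le k<K}\sup_{t\in[t_k,t_{k+1}]}|\h_N(t) - \h_N(t_k)|,
\end{equation*}
and a union bound over the $K$ slabs contributes only a logarithmic factor on the $N^{1+\alpha}$ scale. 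It therefore suffices to bound $\bP(\sup_t|\h(t,0) - \h(t_k,0)|\ge N^{1+\alpha}\ve/4)$ uniformly in $k$.

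On each slab I apply Doob's maximal inequality to the restricted product exponential martingale of Section \ref{sec:expon-mart-} with constants $z_+ = z$, $z_- = -z$:
\begin{equation*}
\mathcal{E}_k(z,t) = \exp\Bigl\{z(\h(t,0)-\h(t_k,0)) - \int_{t_k}^t N^{2+\alpha}\bigl[(e^z-1)\phi_+(\eta_r,0) + (e^{-z}-1)\phi_-(\eta_r,0)\bigr]\,dr\Bigr\},
\end{equation*}
which is a nonnegative martingale with $\mathcal{E}_k(z,t_k) = 1$. Before optimizing over $z$, I invoke the super-exponential estimate of Section \ref{sec:super-estim} to replace each $\phi_\sigma(\eta_r,0)$ by the hydrodynamic value $\phi(\bar\rho(r,0))$; this turns the compensator into an essentially deterministic quantity whose leading part is $N^{2+\alpha}(e^z + e^{-z} - 2)\int_{t_k}^{t_{k+1}}\phi(\bar\rho(r,0))\,dr$, plus a bounded contribution from the quasi-static drift $\phi_+ - \phi_- = \eta_r(0) - \eta_r(1)$. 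Doob's inequality combined with the Taylor expansion $\cosh z - 1 \sim z^2/2$ and an optimization over $z$ of order $\ve/\delta$ then yields a slab bound of the form $\exp(-C N^{1+\alpha}\ve^2/\delta)$; the symmetric bound for the negative increment follows analogously.

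Assembling via the union bound gives
\begin{equation*}
\limsup_{N\to\infty}\frac{1}{N^{1+\alpha}}\log\bP\Bigl(\sup_{|t-s|\le\delta}|\h_N(t) - \h_N(s)|\ge\ve\Bigr) \le -\frac{C\ve^2}{\delta},
\end{equation*}
which tends to $-\infty$ as $\delta\to 0$, as required. \emph{The main technical obstacle is the rate inflation from $N^\alpha$ to $N^{1+\alpha}$}: with the a priori bound $\phi_\pm\le 1/4$ the Chebyshev--Doob estimate only produces a rate $N^\alpha\ve^2/\delta$, insufficient for exponential tightness on the $N^{1+\alpha}$ scale. The boost to the correct order relies on the near-cancellation $(e^z-1)\phi_+ + (e^{-z}-1)\phi_- \approx 2\phi(\bar\rho)(\cosh z - 1)$ that occurs when $\phi_+\approx\phi_-$, which is precisely what the super-exponential replacement of Section \ref{sec:super-estim} supplies along the hydrodynamic profile.
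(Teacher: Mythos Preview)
Your argument has a genuine gap at the step where you claim the super-exponential replacement boosts the rate from $N^{\alpha}$ to $N^{1+\alpha}$. Even if one could replace $\phi_\pm(\eta_r,0)$ by $\phi(\bar\rho(r,0))$ exactly, the compensator of your single-bond martingale is still
\[
N^{2+\alpha}\int_{t_k}^{t_{k+1}} 2\phi(\bar\rho(r,0))(\cosh z -1)\,dr \ \sim\ N^{2+\alpha}\,\delta\, z^2 ,
\]
which is of order $N^{2+\alpha}$, not $N^{1+\alpha}$. With this compensator the Chebyshev--Doob bound is $\exp\{-zN^{1+\alpha}\ve/4 + C z^2 N^{2+\alpha}\delta\}$; optimising gives $z^*\sim \ve/(N\delta)$ and rate $N^{\alpha}\ve^2/\delta$, exactly the insufficient rate you flagged. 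Your choice $z\sim\ve/\delta$ makes the compensator \emph{larger} than the increment by a factor $N$. The ``near-cancellation'' you invoke only kills the linear-in-$z$ part of the compensator, which was never the obstruction; the quadratic part is unaffected. (There is also a secondary issue: the super-exponential estimate of Section~\ref{sec:super-estim} concerns spatially summed functionals $\sum_x G(t,x/N)\phi_\pm(\eta,x)$, not the single-bond quantity $\phi_\pm(\eta_r,0)$, so the replacement you want is not what that section provides.)

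The paper's proof avoids this by \emph{spatially averaging} the current: one works with $\bar\h_N(t)=\tfrac1{2N}\sum_x \h_N(t,x)$ (which differs from $\h_N(t,0)$ by $O(N^{-\alpha})$) and chooses $z_+(t,x)=\beta/N$ at \emph{every} bond. The increment in the martingale exponent is then $\tfrac{\beta}{N}\sum_x \h(t,x)= 2\beta N^{1+\alpha}\bar\h_N(t)$, while in the compensator the linear part $\tfrac{\beta}{N}\sum_x(\phi_+-\phi_-)=\tfrac{\beta}{N}(\eta(-N)-\eta(N))$ telescopes to $O(|\beta|/N)$ and the quadratic part is $\tfrac{\beta^2}{2N^2}\sum_x(\phi_++\phi_-)\le \beta^2/N$. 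After multiplying by $N^{2+\alpha}\delta$ this gives a compensator of order $N^{1+\alpha}\delta(|\beta|+\beta^2)$, and the standard optimisation in $\beta$ yields the slab bound $\exp\{-C N^{1+\alpha}\ve^2/\delta\}$. The extra factor of $N$ comes from this spatial summation and telescoping, not from any hydrodynamic replacement.
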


\begin{proof}[Proof of Proposition \ref{sexp-h} ]
  Since the difference between $\h_N(t,1)$ and $\bar\h_N(t) := \frac 1{2N} \sum _x \h_N(t,x/N)$ 
is uniformly small, we have just to prove it for $\bar\h_N(t)$.
  For $\beta\in \bR$, consider the  exponential martingale \eqref{eq:21} 
with $z_+(t,x) = \frac \beta N,\ x=-N, \dots, N-1$ and $z_+(t,-N-1) =0$. 
  This is given by
  \begin{equation*}
    \begin{split}
      M_t = &\exp\Big\{ N^{1+\alpha} \left(\beta\; \bar\h_N(t) - A_N(\beta,t)\right) \Big\}\\
     A_N(\beta,t) = &  N \int_0^t \sum_{x=-N}^{N-1}\left( (e^{\beta/N} - 1) \phi_+(\eta_s,x) + (e^{-\beta/N} - 1) \phi_-(\eta_s,x)\right)  ds  
    \end{split}
  \end{equation*}
  {Notice that, by expanding the exponentials and using the explicit form of $\phi_\pm$, we have
  that} $0\le A_N(\beta,t) \le C T (|\beta| + \beta^2)$ for some constant $C$.
Then for any $\beta>0$ we have, by Doob's inequality,
\begin{equation}\label{eq:A1}
  \begin{split}
    \bP_\eta\left( \sup_{0\le t\le T} |\bar\h_N(t)| \ge L\right) &\le 
    \bP_\eta\left( \sup_{0\le t\le T} |\log M_t| \ge N^{1+\alpha} \left(\beta L- C T (\beta + \beta^2)\right)\right) \\
    &\le \bP_\eta\left( \sup_{0\le t\le T} \log M_t \ge N^{1+\alpha} \left(\beta L - C T (\beta + \beta^2)\right)\right)\\
    &= \bP_\eta\left( \sup_{0\le t\le T} M_t \ge e^{N^{1+\alpha} \left(\beta L- C T (\beta + \beta^2)\right)}\right)\\
   & \le e^{- N^{1+\alpha} \left(\beta L- C T (\beta+ \beta^2)\right)},
  \end{split}
\end{equation}
that concludes the proof.
\end{proof}

\begin{proof}[Proof of Proposition \ref{sexp-modh} ]
Since
\begin{equation*}
  \begin{split}
    \left\{\sup_{|t-s|\le \delta} |\bar\h_N(t) - \bar\h_N(s)| \ge \ve \right\} \subset 
    \bigcup_{k=0}^{[T\delta^{-1}]} \left\{\sup_{k\delta \le t\le (k+1) \delta} |\bar\h_N(t) - \bar\h_N(k\delta)| \ge \ve/4 \right\}
  \end{split}
\end{equation*}
Since
\begin{equation*}
  \begin{split}
    \log \mathbb \bP_\eta \left(\sup_{|t-s|\le \delta} |\bar\h_N(t) - \bar\h_N(s)| \ge \ve \right) \le \max_{k} 
    \log \mathbb \bP_\eta \left(\sup_{k\delta \le t\le (k+1) \delta} |\bar\h_N(t) - \bar\h_N(k\delta)| \ge \ve/4 \right) \\
    + \log([T\delta^{-1}])
  \end{split}
\end{equation*}
By the same estimate made in \eqref{eq:A1} we have
\begin{equation*}
  \log \mathbb \bP_\eta \left(\sup_{k\delta \le t\le (k+1) \delta} |\bar\h_N(t) -\bar\h_N(k\delta)| \ge \ve/4  \right) \le 
  - N^{1+\alpha} \left(\beta \ve/4 - C\delta (\beta + \beta^2)\right)
\end{equation*}
and with a proper choice of $\beta$ we get the following bound with a constant $C'$ independent of $k$: 
\begin{equation*}
  \frac 1{N^{1+\alpha}}  \log \mathbb \bP_\eta \left(\sup_{k\delta \le t\le (k+1) \delta} |\bar\h_N(t) -\bar \h_N(k\delta)|
    \ge \ve/4  \right) \le -\frac{C'\ve^2}{\delta}.
\end{equation*}
\end{proof}

\bibliographystyle{amsalpha}

\begin{thebibliography}{99}

\bibitem{bertiniprl}
 Bertini L., A. De Sole, Gabrielli D., Jona-Lasinio G., and Landim C.,
\emph{Macroscopic current fluctuations in stochastic lattice gases}, 
 Physical Review letters,  \textbf{94}:030601, (2005).

\bibitem{bertinijsp2016} 
 Bertini L., A. De Sole, Gabrielli D., Jona-Lasinio G., and Landim C.,
\emph{Non Equilibrium Current Fluctuations in Stochastic Lattice Gases},
Journal of Statistical Physics, \textbf{123}, No. 2, 237-276, (2006), DOI: 10.1007/s10955-006-9056-4


\bibitem{bertinirmp}
 Bertini L., A. De Sole, Gabrielli D., Jona-Lasinio G., and Landim C.,
\emph{Macroscopic Fluctuation Theory}, Review of Modern Physics, \textbf{87}, 593--636, (2014).

\bibitem{bgjll13}
 Bertini L., Gabrielli D., Jona-Lasinio G., and Landim C., \emph{Clausius
  inequality and optimality of quasistatic transformations for nonequilibrium stationary
states}. Physical Review Letters,  \textbf{110}(2):020601, (2013). 

\bibitem{BLM}
Bertini L., Landim C., Mourragui M., \emph{Dynamical large deviations for the 
boundary driven weakly asymmetric exclusion process}, 
Annals of Probability, \textbf{37}, 6, 2357-2403, (2009).

\bibitem{bd04} 
Bodineau T., Derrida B., \emph{Current Fluctuations in Nonequilibrium Diffusive Systems: An Additivity Principle},
 Physical Review Letters, \textbf{92}, 180601, (2004).

\bibitem{bd06} 
Bodineau T., Derrida B., \emph{Current Large Deviations for Asymmetric Exclusion Processes with 
Open Boundaries}, Journal of Statistical Physics, \textbf{123}, n.2, 277-- 300 (2006). 

\bibitem{ch-landim}
  Chavez E., Landim C., \emph{A Correction to the Hydrodynamic Limit 
    of Boundary Driven Weakly Asymmetric Exclusion Processes in a Quasi-Static Time Scale},
  Journal of Statistical Physics, \textbf{163}, Issue 5, pp 1079–1107, (2016). 

 \bibitem{DPTV2}
 De Masi A., Presutti E., Tsagkarogiannis D., and Vares M.E., \emph{Truncated correlations in the stirring process with
   births and deaths }, Electronic Journal of Probability \textbf{17} 1-35 (2012)


\bibitem{dmo1} De Masi A., Olla S., \emph{Quasi Static Hydrodynamic Limits},  Journal of
  Statistical Physics, \textbf{161}, n.5, 1037--1058 (2015). 

\bibitem{GKMP}   Galves A.,  Kipnis C., Marchioro C. and
  Presutti E., \emph {Non equilibrium measures which exhibt a
    temperature gradient: study of a model. }  Commun. Math. Phys.,
 \textbf{819}, 127-1474 (1981). 

\bibitem{KLbook} Kipnis C.,  Landim C.,
 \emph {Scaling Limits of Interacting Particle Systems}, Grundlehren der mathematischen
Wissenschaften 320, Springer-Verlag, Berlin, New York, (1999).


\bibitem{KOV1989}
 Kipnis C.,  Olla S., and Varadhan  S.R.S.,   \emph{Hydrodynamics
and large deviation for simple exclusion processes}, Commun. Pure Appl. Math. \textbf{42}  115 --137(1989). 

\bibitem{LO2015}  Letizia V., and Olla S.,  \emph{ Non-equilibrium isothermal transformations in a 
  temperature gradient from a macroscopic dynamics}, {Annals of Probability}, \textbf{45}, N. 6A (2017), 3987--4018.
 doi:10.1214/16-AOP1156. 

\bibitem{olla2014}
 Olla S., \emph{Microscopic derivation of an isothermal thermodynamic
  transformation}, In From Particle Systems to Partial Differential
Equations, pages 225--238. Springer, 2014. 

\bibitem{Var} Varadhan, S.R.S.,
\emph{Large Deviations and Applications}, 
CBMS-NSF Regional Conference Series in Applied Mathematics 46, Philadelphia, 
Society for Industrial and Applied Mathematics, 1984. 

\end{thebibliography}

\end{document}